\newtheorem{thm}{\bf Theorem}[section]
\newtheorem{prop}[thm]{\bf Proposition}
\newtheorem{lemma}[thm]{\bf Lemma}
\newtheorem{cor}[thm]{\bf Corollary}
\theoremstyle{definition}
\theoremstyle{remark}
\newtheorem{remark}[thm]{\bf Remark}
\newtheorem{Notation}[thm]{\bf Notation}
\newtheorem{question}[thm]{\bf Question}
\newtheorem{example}[thm]{\bf Example}
\numberwithin{equation}{section}
\newcommand{\HH}[3]{\operatorname{H}^{#1}_{#2}(#3)}
\DeclareMathOperator{\height}{{ht}}
\DeclareMathOperator{\depth}{{depth}}
\DeclareMathOperator{\Spec}{{Spec}}
\DeclareMathOperator{\Ass}{{Ass}}
\DeclareMathOperator{\Ext}{{Ext}}
\DeclareMathOperator{\Hom}{{Hom}}
\DeclareMathOperator{\Ker}{{{Ker}}}
\DeclareMathOperator{\chara}{{{char}}}
\DeclareMathOperator{\Sing}{{{Sing}}}
\DeclareMathOperator{\codim}{{{codim}}}
\DeclareMathOperator{\Proj}{{Proj}}
\DeclareMathOperator{\indeg}{{indeg}}
\DeclareMathOperator{\topdeg}{{topdeg}}
\DeclareMathOperator{\reg}{{reg}}
\DeclareMathOperator{\Sym}{{Sym}}
\def\ls{\leqslant}
\def\gs{\geqslant}
\def\f0{\mathbf{0}}
\def\fx{\mathbf{x}}
\def\fp{\mathbf{p}}
\def\fa{\mathbf{a}}
\def\fm{\mathfrak{m}}
\def\fn{\mathfrak{n}}
\def\fp{\mathfrak{p}}
\def\ffq{\mathfrak{q}}
\def \ZZ{\mathbb Z}
\def \NN{\mathbb N}
\def \C{\mathcal C}
\def \R{\mathcal R}
\def \M{\mathcal M}
\begin{document}

\title[On asymptotic vanishing behavior of local cohomology]{On asymptotic vanishing behavior of local cohomology}

\author[Hailong Dao]{Hailong Dao}
\address{Hailong Dao\\ Department of Mathematics \\ University of Kansas\\405 Snow Hall, 1460 Jayhawk Blvd.\\ Lawrence, KS 66045}
\email{hdao@ku.edu}

\author[Jonathan Monta\~no]{Jonathan Monta\~no}
\address{Jonathan Monta\~no \\ Department of Mathematical Sciences  \\ New Mexico State University  \\PO Box 30001\\Las Cruces, NM 88003-8001}
\email{jmon@nmsu.edu}

  \begin{abstract}
  Let $R$ be a standard graded algebra over a field $k$, with irrelevant maximal ideal $\fm$, and $I$ a homogeneous $R$-ideal. We study the asymptotic vanishing behavior of the graded components of the local cohomology modules $\{\HH{i}{\fm}{R/I^n}\}_{n\in \NN}$  for $i<\dim R/I$. We show that, when $\chara k= 0$, $R/I$ is Cohen-Macaulay, and $I$ is a complete intersection locally on $\Spec R \setminus\{\fm\}$, the lowest degrees of the modules $\{\HH{i}{\fm}{R/I^n}\}_{n\in \NN}$ are bounded by a linear function whose slope is controlled by the generating degrees of the dual of $I/I^2$. Our result is a direct consequence of a related bound for symmetric powers of locally free modules. If no assumptions are made on the ideal or the field $k$, we show that the complexity of the sequence of lowest degrees is at most polynomial, provided they are finite. Our methods also provide a result on stabilization of maps between local cohomology  of consecutive powers of ideals.
  \end{abstract}

\keywords{Local cohomology, homogeneous ideals, complete intersections, symmetric powers, Kodaira vanishing.}
\subjclass[2010]{13D45, 14B15, 14M10, 13A02, 13D02.}

\maketitle

\section{Introduction}
Let $R$ be a Noetherian standard graded algebra over a field $k=R_0$, $\fm=\oplus_{n\in \ZZ_{> 0}}R_n$, and $I$ a homogeneous $R$-ideal. In this paper we study asymptotic behavior of the lowest degree of the local cohomology modules $\{\HH{i}{\fm}{R/I^n}\}_{n\in \NN}$, provided that they are finite.  As we make clear below, such behavior can be viewed as an ``asymptotic Kodaira vanishing for thickenings" phenomenon, and have recently appeared in various works such as \cite{BBLSZ, DM, Claudiu}.

To describe our motivations and questions precisely, let us recall some notations. For a graded $R$-module $M=\oplus_{i\in \ZZ}M_i$ one  defines $$\indeg{M}=\min\{i\mid M_i\neq0\},\qquad \topdeg{M}=\max\{i\mid M_i\neq 0\}.$$
If $M=0$, we set $\indeg M=\infty$ and $\topdeg M=-\infty$. We also set 
$$\beta(M)=\max\{i\mid (M/\fm M)_i\neq0\},$$
i.e., $\beta(M)$ is the maximal degree of an element in a minimal set of  homogeneous generators of $M$. 
The {\it Castelnuovo-Mumford regularity} of $M$ is defined as $$\reg(M)=\max\{\topdeg{\HH{i}{\fm}{M}}+i\}.$$
It is known that $\reg(R/I^n)$ agrees with a  linear function for $n\gg 0$, this fact was proved independently in \cite{CHT} and \cite{Kod} when $R$ is a polynomial ring over a field, and extended in \cite{TW} for arbitrary standard graded rings.

If $\HH{i}{\fm}{M}\neq 0$, the $\topdeg$ of $\HH{i}{\fm}{M}$ is always finite, however this is not the case for $\indeg$. In fact, since $\HH{i}{\fm}{M}$ is an Artinian module, we have that $\indeg\HH{i}{\fm}{M}>-\infty$ if and only if  $\HH{i}{\fm}{M}$ is Noetherian. Our work is guided by  the following questions raised in \cite{DM}.

\begin{question}\label{motivQ}
Assume $\HH{i}{\fm}{R/I^n}$ is Noetherian for $n\gg 0$.
\begin{enumerate}
\item Does there exist $\alpha \in \ZZ$ such that $\indeg \HH{i}{\fm}{R/I^n} >\alpha n$ for every $n\gg 0$? In other words, is $\displaystyle\liminf_{n\rightarrow \infty} \frac{ \indeg\HH{i}{\fm}{R/I^n}}{n}$ finite?
\item If so,  does the limit $\displaystyle\lim_{n\rightarrow \infty} \frac{ \indeg\HH{i}{\fm}{R/I^n}}{n}$ exist?
\end{enumerate}

\end{question}

It follows from  \cite[1.4]{BBLSZ} that when $R$ is a polynomial ring over a field of characteristic $0$, $I$ is a prime ideal,  $X:=\Proj(R/I)$ is locally a complete intersection (lci), and $i$ is at most the codimension of the singular locus of $X$, then $ \indeg\HH{i}{\fm}{R/I^n} \gs 0$ for all $n>0$. As explained there, this can be viewed as a Kodaira Vanishing Theorem for thickenings of $X$. When $I$ is a determinantal ideal,  more precise behavior of vanishing results, and  other homological invariants of thickenings of $I$ are available, see for instance \cite{Claudiu} and \cite{Raicu2}.

Our initial interest in the question came from \cite{DM} where we need an affirmative answer to part (1) of Question \ref{motivQ} to obtain efficient bounds on the lengths of  local cohomology modules of powers. Robert Lazarsfeld pointed out to us that this is indeed the case when  $X=\Proj(R/I)$ is a l.c.i variety, $k$ is of characteristic zero,  and $i$ is at most the dimension of $X$. Thus Kodaira vanishing may not hold, but the lowest degrees of $\HH{i}{\fm}{R/I^n}$ are still bounded below by a linear function. 

In this work we provide further answers to Question \ref{motivQ} above, in the case when $R$ is not necessarily a polynomial ring and $I$ may not be prime, or even reduced. The first main general  result of this article (Theorem \ref{mainVB}), provides a linear lower bound for the initial degrees of local cohomology of the symmetric powers $\{S^n(E)\}_{n\in\NN}$, where  $E$ is a  graded module that is locally free on $\Spec R \setminus \{\fm\}$. Our proof relies on a duality statement and the result on regularity by Trung and Wang in \cite{TW}.  Here $(-)^*$ denotes the $R$-dual $\Hom_R(-,R)$.

\begin{thm} \label{MainT1}
Let $(R,\fm)$ be a standard Cohen-Macaulay graded algebra over a field $k$ of characteristic zero. Set $d=\dim R\gs  2$. Let $E$ be a  graded $R$-module which is free locally on $\Spec R\setminus \{\fm\}$.
Then there exists an integer $\varepsilon$ such that $$\indeg \HH{i}{\fm}{S^n(E)} \gs -\beta(E^*) n+\varepsilon$$ for every $n\gs 1$ and $ 1\ls i< d$.
\end{thm}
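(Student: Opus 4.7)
The plan is to combine graded local duality with a spectral-sequence argument to reduce the question to a regularity bound on $S^n(E^*) \otimes_R \omega_R$, which is then supplied by the Trung--Wang theorem.

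First, since $R$ is Cohen--Macaulay of dimension $d$, graded local duality gives $\HH{i}{\fm}{S^n(E)}^\vee \cong \Ext^{d-i}_R(S^n(E), \omega_R)$, so writing $j := d - i \in \{1, \ldots, d-1\}$ it suffices to produce a constant $C$ independent of $n$ with $\topdeg \Ext^{j}_R(S^n(E), \omega_R) \le \beta(E^*)\,n + C$. Because $E$, hence $S^n(E)$, is locally free on $\Spec R \setminus \{\fm\}$, $\Ext^q_R(S^n(E), \omega_R)$ has finite length for every $q \ge 1$, so the Grothendieck spectral sequence
$$E_2^{p,q} = \HH{p}{\fm}{\Ext^q_R(S^n(E), \omega_R)} \;\Longrightarrow\; S^n(E)^\vee[-d],$$
derived from the natural isomorphism $R\Gamma_\fm R\Hom_R(-, \omega_R) \simeq R\Hom_R(-, R\Gamma_\fm \omega_R) \simeq R\Hom_R(-, E_R(k)[-d])$, has its $E_2$-page supported on the hook $\{p = 0\} \cup \{q = 0\}$. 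Since the abutment vanishes in total degrees different from $d$, the transgression $d_{j+1}^{0, j}\colon \Ext^j_R(S^n(E), \omega_R) \to E_{j+1}^{j+1, 0}$ is forced to be injective, and $E_{j+1}^{j+1, 0}$ is a subquotient of $\HH{j+1}{\fm}{\Hom_R(S^n(E), \omega_R)}$. Hence
$$\topdeg \Ext^{j}_R(S^n(E), \omega_R) \;\le\; \topdeg \HH{j+1}{\fm}{\Hom_R(S^n(E), \omega_R)}.$$

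Next, the characteristic zero hypothesis translates the right-hand side into a local cohomology of symmetric powers of $E^*$. In characteristic zero, for a locally free sheaf $\mathcal F$ one has a canonical isomorphism $(S^n \mathcal F)^\vee \cong S^n(\mathcal F^\vee)$; applied to the sheaf associated to $E$ on $X = \Proj R$, the sheafifications of $\Hom_R(S^n(E), \omega_R)$ and $S^n(E^*) \otimes_R \omega_R$ coincide. Hence these two $R$-modules differ only by modules of finite length, which contribute solely to $\HH{0}{\fm}$, and since $j + 1 \ge 2$,
$$\HH{j+1}{\fm}{\Hom_R(S^n(E), \omega_R)} \;\cong\; \HH{j+1}{\fm}{S^n(E^*) \otimes_R \omega_R}.$$
Combined with the elementary inequality $\topdeg \HH{k}{\fm}{N} \le \reg(N) - k$, the problem reduces to establishing that $\reg\bigl(S^n(E^*) \otimes_R \omega_R\bigr) \le \beta(E^*)\,n + C'$ for all $n \ge 1$.

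This last bound is an instance of the Trung--Wang asymptotic regularity theorem. Since $E^*$ is generated in degrees at most $\beta(E^*)$, the $R$-algebra $\Sym(E^*)$ is a quotient of a polynomial ring $R[y_1, \ldots, y_s]$ with $\deg y_k \le \beta(E^*)$, and $\bigoplus_n \bigl(S^n(E^*) \otimes_R \omega_R\bigr)$ is a finitely generated bigraded module over it. After passing to an appropriate Veronese to make the first grading standard, the Trung--Wang theorem supplies the asserted linear regularity bound, completing the proof. The main obstacle is the spectral-sequence argument: one must verify the hook shape of $E_2$ (stemming from the locally-free hypothesis on $E$) and extract from the vanishing of the off-diagonal abutment the forced injectivity of the transgression; the characteristic zero hypothesis enters only through the duality $(S^n \mathcal F)^\vee \cong S^n(\mathcal F^\vee)$ used to identify $\Hom_R(S^n(E), \omega_R)$ with $S^n(E^*) \otimes_R \omega_R$ up to finite length, without which the connection to $\beta(E^*)$ would be lost.
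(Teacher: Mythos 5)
Your proof is correct, and it takes a genuinely different route from the paper's at the key step. The paper proves an explicit Horrocks-style duality (Proposition~\ref{duality}) via syzygies and applies it to the module $S^n(E^*)\otimes_R\omega$, obtaining an \emph{isomorphism} $\HH{i}{\fm}{S^n(E^*)^*}\cong\HH{d-i+1}{\fm}{S^n(E^*)\otimes\omega}^\vee$; the case $i=1$ then requires a separate ad hoc argument via the presentation of $S^n(E^*)$. You instead invoke the local-cohomology spectral sequence $E_2^{p,q}=\HH{p}{\fm}{\Ext_R^q(S^n(E),\omega)}\Rightarrow S^n(E)^\vee[-d]$, observe that local freeness of $E$ off $\fm$ collapses the $E_2$-page to the hook $\{p=0\}\cup\{q=0\}$, and extract from the abutment's vanishing in total degree $j=d-i<d$ that the transgression $\Ext^j_R(S^n(E),\omega)\hookrightarrow E_{j+1}^{j+1,0}$ is injective. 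This gives only an \emph{inequality} $\topdeg\Ext^j_R(S^n(E),\omega)\ls\topdeg\HH{j+1}{\fm}{\Hom_R(S^n(E),\omega)}$ rather than an identification, but that is all the theorem needs, and it has the pleasant feature of treating $i=1$ and $i\gs 2$ uniformly. Both arguments rely on the same two external inputs: the characteristic-zero identification of $\Hom_R(S^n(E),\omega)$ with $S^n(E^*)\otimes\omega$ up to finite-length modules (which is the content of Lemma~\ref{symm}, via divided powers), and the Trung--Wang linear regularity bound for $\{S^n(E^*)\otimes\omega\}_n$ (Theorem~\ref{regModules}).

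Two small remarks on exposition. First, the claim that the two modules $\Hom_R(S^n(E),\omega)$ and $S^n(E^*)\otimes_R\omega$ ``differ only by modules of finite length'' should be anchored to the explicit natural map $S^n(E^*)\otimes\omega\to\Hom_R(S^n(E),\omega)$ (composing the map of Lemma~\ref{symm} with evaluation); having coinciding sheafifications is not by itself a comparison, but this map \emph{is} an isomorphism off $\fm$, so its kernel and cokernel have finite length and the long exact sequence gives the desired isomorphism on $\HH{j+1}{\fm}{-}$ for $j+1\gs 2$. Second, the proposed Veronese reduction to make the bigrading standard is unnecessary and a bit delicate; the cited Trung--Wang result (and the paper's Theorem~\ref{regModules}, which is its module version) is stated for the nonstandard bigraded setting and applies directly, giving $\reg(S^n(E^*)\otimes\omega)\ls\beta(E^*)n+\text{const}$ without further manipulation.
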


We apply this Theorem to answer Question \ref{motivQ} (1) affirmatively and effectively when $R$ is any standard graded algebra over a field of characteristic $0$, $R/I$ is Cohen-Macaulay, and $I$ is a complete intersection locally on $\Spec R\setminus \{\fm\}$. In this case,  $\liminf_{n\rightarrow \infty} \frac{ \indeg\HH{i}{\fm}{R/I^n}}{n}$ is bounded below by $-\max\{\beta(E^*),0\}$ where $E$ is the conormal module $I/I^2$ (see Corollary \ref{conormal}). This result can be seen as an  algebraic version of \cite[1.4]{BBLSZ} and  \cite[5.6]{DM},  our proof via local cohomology of symmetric powers of conormal modules is inspired by the proofs of these results.

Theorem \ref{MainT1} also allows us to show a result  on stabilization of maps of local cohomology of powers of ideals. We show that, if $I$ is as in the previous paragraph, and if $\beta(E^*)<0$, then the maps between local cohomology and Ext modules of consecutive powers of $I$ eventually stabilize on each graded degree.  This result is closely related to  \cite[1.1]{BBLSZ} and provide a partial answer to a question of Eisenbud, Musta\c{t}\u{a}, and Stillman  (\cite[6.1]{EMS}), see Corollary  \ref{stabExt} and its preceding paragraph for more details. 

In general, if one only assumes that $\HH{i}{\fm}{R/I^n}$ is Noetherian for $n\gg 0$, it is complicated to find bounds on its lowest degrees. However, we are able to prove that there is a  polynomial lower bound, regardless of the characteristic of $k$. The proof rests on a result by Chardin, Ha, and Hoa (\cite{CHH}), and is provided in Section \ref{polySec}.  

In Section \ref{monoSec} we  focus on the case where $I$ is a monomial ideal in a polynomial ring $R$. As expected, the extra combinatorial structure allows for better results. Assuming that $\HH{i}{\fm}{R/I^n}$ is Noetherian for $n\gg 0$, one can show that either $\indeg\HH{i}{\fm}{R/I^n}=0$ for $n\gg 0$ or  $\displaystyle\liminf_{n\rightarrow \infty} \frac{ \indeg\HH{i}{\fm}{R/I^n}}{n}\gs 1$, and the latter holds precisely when $\tilde H_{i-1}(\Delta(I))= 0$, where $\Delta(I)$ is the simplicial complex whose Stanley-Reisner ideal is  $\sqrt{I}$.

\section{Symmetric Powers of Locally Free Modules and Linear Lower Bound}\label{linearSec}
Let $E$ be a Noetherian graded module  and set  $u=\mu(E):=\dim_k E/\fm E$. Let  $$F_1\xrightarrow{\phi} F_0\to E\to 0$$
be a minimal presentation of $E$,  where $F_0$ and $F_1$ are graded free $R$-modules, and $\phi$ is an $u\times s$ matrix with entries in $\fm$. Let $T_1,\ldots, T_u$ be a set of variables and $\ell_1,\ldots, \ell_s$ the linear forms determined by 
$$\begin{bmatrix}\ell_1,&\cdots,&\ell_s
\end{bmatrix}=
\begin{bmatrix}T_1,&\cdots,&T_u
\end{bmatrix}\phi.
$$
The ring  $\Sym(E):=R[T_1,\ldots, T_u]/(\ell_1,\cdots, \ell_s)$ is  the {\it symmetric algebra} of $E$. Let $d_1,\ldots, d_u$ be the degrees of a homogeneous minimal generating set of $E$.   We can assign to $\Sym(E)$ a bi-graded structure where $T_i$ has bi-degree $(d_i,1)$ for every $i=1,\ldots, u$. The $n$th-graded component of $\Sym(E)$,  $S^n(E)=\oplus_{a\in \ZZ}\Sym(E)_{(a,n)}$, is the {\it $n$th-symmetric power} of $E$. 

Let $M$ be any Noetherian graded $R$-module and $U\subseteq E$ a graded submodule. We say $U$ is an {\it $M$-reduction} of $E$, if $S^n(E)\otimes_R M=S^1(U)S^{n-1}(E)\otimes_RM$ for $n\gg 0$, where $S^1(U)$ is seen as a submodule of $S^1(E)$. Following \cite{TW}, we define $$\rho_M(E):=\min\{\beta(U)\mid U \text{ is an $M$-reduction of } E \}.$$ We note that  $\rho_M(E)\ls \beta(E)$ for every $R$-module $M$. The following theorem is the module version of \cite[3.2]{TW} and the proofs of these results are identical, however we include some relevant details for the reader's convenience. We remark that even though the algebras in \cite{TW} are positively graded, the proof of this result does not use this assumption.

\begin{thm} \label{regModules}
Let $R$ be a standard graded algebra over a Noetherian ring $A$. Let $E$ and $M$ be finitely generated graded $R$-modules. Then $$\reg(S^n(E)\otimes_R M)=\rho_M(E)n+e$$
for some integer $e\gs \indeg M$ and every $n\gg 0$.
\end{thm}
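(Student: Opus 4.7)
The plan is to adapt the proof of Trung--Wang \cite{TW} to this module setting, which as the excerpt notes is essentially identical. The central object of study is the bi-graded $\Sym(E)$-module
$$\mathcal{M} := \Sym(E) \otimes_R M = \bigoplus_{n \gs 0} S^n(E) \otimes_R M,$$
where $T_i$ carries bi-degree $(d_i, 1)$. This is a finitely generated bi-graded module over $\Sym(E)$, and its $n$-th ``outer-graded'' piece is precisely $S^n(E) \otimes_R M$ equipped with its own internal $\ZZ$-grading inherited from $R$.

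For the upper bound, I would use a graded $M$-reduction $U \subseteq E$ realizing $\beta(U) = \rho_M(E)$. By definition, there exists an integer $r$ with
$$S^n(E) \otimes_R M = S^1(U) \cdot (S^{n-1}(E) \otimes_R M) \quad \text{for all } n > r,$$
and iterating yields a graded surjection $S^{n-r}(U) \otimes_R (S^r(E) \otimes_R M) \twoheadrightarrow S^n(E) \otimes_R M$ for $n \gs r$. Since $U$ is generated in degrees at most $\rho_M(E)$, standard computations of how regularity behaves under tensor products and surjections give $\reg(S^n(E) \otimes_R M) \ls \rho_M(E)\, n + c$ for some constant $c$ and all $n \gg 0$.

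The matching linearity would follow from the general principle at the heart of \cite{TW}: the regularity of the outer-graded components of any finitely generated bi-graded module over a standard bi-graded algebra of the form $\Sym(E)$ is eventually linear in the outer degree. Once linearity is known, the slope must coincide with $\rho_M(E)$, since a strictly smaller slope would allow one to extract an $M$-reduction of $E$ with smaller generating degree, contradicting the minimality defining $\rho_M(E)$. The constant-term bound $e \gs \indeg M$ would follow by elementary degree considerations: $\reg(S^n(E) \otimes_R M) \gs \indeg(S^n(E) \otimes_R M) \gs n \cdot \indeg E + \indeg M$, combined with $\rho_M(E) \gs \indeg E$ (immediate from $U \subseteq E$).

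The main obstacle will be confirming that the eventual-linearity machinery in \cite{TW} continues to function verbatim when the base object is an arbitrary finitely generated module $M$ rather than a quotient ring. Their argument controls regularity via graded local cohomology of the bi-graded module together with spectral-sequence and filtration techniques; since these tools depend only on finite generation over $\Sym(E)$, I expect no essential new difficulty, but one must track carefully that tensoring with $M$ preserves the relevant finiteness and vanishing properties of local cohomology needed to identify the slope.
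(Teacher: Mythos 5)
Your overall strategy (reduce to the bi-graded module $\M=\Sym(E)\otimes_R M$ and lean on the Trung--Wang eventual-linearity machinery) is the right one, and for the upper bound and the slope identification your outline is in the same spirit as the paper, though the paper's route is cleaner: rather than studying $\M$ over $\Sym(E)$ and bounding regularity via surjections, the paper views $\M$ directly as a finitely generated module over $\Sym(U)$, where $U$ is a minimal $M$-reduction. Since $\Sym(U)$ is a quotient of a bi-graded polynomial ring $A[x_1,\ldots,x_s,y_1,\ldots,y_v]$ with each $y_j$ of bi-degree $(u_j,1)$ and $u_j\ls\rho_M(E)$, \cite[2.2]{TW} applied over $\Sym(U)$ immediately gives eventual linearity with slope at most $\rho_M(E)$, with no need to chase tensor/surjection regularity estimates.

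The genuine gap is in your argument for $e\gs\indeg M$. You propose the chain
$$\rho_M(E)\,n+e=\reg(S^n(E)\otimes_R M)\gs\indeg(S^n(E)\otimes_R M)\gs n\cdot\indeg E+\indeg M,$$
and then invoke $\rho_M(E)\gs\indeg E$. But this chain only yields $e\gs\indeg M - n\bigl(\rho_M(E)-\indeg E\bigr)$, and the right-hand side tends to $-\infty$ whenever $\rho_M(E)>\indeg E$, which is the typical situation. Your lower bound on regularity has slope $\indeg E$, which is too small to constrain the constant term of a line of slope $\rho_M(E)$. What is actually needed is a lower bound of the correct slope, namely $\reg(S^n(E)\otimes_R M)\gs\rho_M(E)\,n+\indeg M$ for all $n$, and establishing that inequality is the real content of \cite[3.1]{TW} (which the paper invokes). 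Relatedly, your derivation of $\rho\gs\rho_M(E)$ via ``extract an $M$-reduction of smaller generating degree from a smaller slope'' is only a slogan: turning that slogan into a proof again requires the precise regularity lower bound and an argument that elements of $E$ of degree at most $\rho$ already generate an $M$-reduction, which is where the actual work of \cite[3.1]{TW} lies.
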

\begin{proof}

Let $U$ be an $M$-reduction of $E$ such that $d(U)=\rho_M(E)$. Let $\M=\Sym(E)\otimes_R M = \oplus_{n\in \NN} S^n(E)\otimes_R M$ and notice $\M$ is a finitely generated graded $\Sym(U)$-module. Let $s=\mu_A(R_1)$, $v=\mu(U)$, and $u_1,\ldots, u_v$ the degrees of a homogeneous minimal generating set of $U$, then $\Sym(U)$ is a quotient ring of the bi-graded polynomial ring $A[x_1,\ldots, x_s, y_1,\ldots, y_v]$ where $x_i$ has degree $(1,0)$  for each $i$ and $y_j$ has degree $(u_i,1)$ for each $j$. 
Therefore, \cite[2.2]{TW} implies $\reg(S^n(E)\otimes_R M)$ is a linear function $\rho n+e$ for some $e$ and $\rho\ls \rho_M(E)$. Finally, proceeding as in \cite[3.1]{TW} we obtain $\rho \gs \rho_M(E)$ and $e\gs \indeg M$, finishing the proof. 
\end{proof}

For the next result, we assume $R$ is a local ring or standard graded over a field. Let $\fm$ be the (irrelevant) maximal ideal of $R$,  $k=R/\fm$, and $E_R(k)$ the {\it (graded)  injective hull} of $k$. For a (graded) $R$-module $M$ we set $$M^\vee:=\Hom_R(M,\, E_R(k)).$$

The following is a generalization of a duality result of Horrocks (\cite{Hor}).

\begin{prop}\label{duality}
Let $(R,\fm,k)$ be a Cohen-Macaulay local ring $($or positively graded $k$-algebra$)$. Set
$d=\dim R\gs  2$ 
and $\omega$ a (graded) canonical module of $R$. Fix $1\ls i\ls d-1$, then for a $($graded$)$ $R$-module $M$ of dimension $d$ that is $S_{i+1}$ locally on $\Spec R\setminus \{\fm\}$ we have $($graded$)$ isomorphisms
$$\HH{i}{\fm}{M}^\vee\cong \HH{d-i+1}{\fm}{\Hom_R(M,\omega)}\quad\text{if } i\gs 2,$$
and,
$$\HH{1}{\fm}{M}^\vee\cong \ker\big(\HH{d}{\fm}{\Hom_R(M,\omega)}\rightarrow \HH{d}{\fm}{\Hom_R(F_0,\omega)}\big),$$
where $F_0\twoheadrightarrow M$ is a free module.
\end{prop}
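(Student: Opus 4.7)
The plan is to combine graded local duality with the Grothendieck spectral sequence for the composed functor $\Gamma_\fm\circ\Hom_R(M,-)$ applied to $\omega$. Graded local duality for the Cohen--Macaulay ring $(R,\fm)$ with canonical module $\omega$ gives $\HH{i}{\fm}{M}^\vee\cong \Ext^{d-i}_R(M,\omega)$, so it suffices to identify $\Ext^{d-i}_R(M,\omega)$ with $\HH{d-i+1}{\fm}{\Hom_R(M,\omega)}$ (respectively with the stated kernel when $i=1$). The $S_{i+1}$ hypothesis provides the essential support control on the intermediate Ext modules: localizing at each $\fp\neq\fm$ and applying local duality yields $\Ext^q_R(M,\omega)_\fp\cong \HH{\height\fp-q}{\fp R_\fp}{M_\fp}^\vee$, and the Serre condition then forces $\dim\Ext^q_R(M,\omega)\leq d-q-i-1$ for $1\leq q\leq d-i-2$, while making $\Ext^q_R(M,\omega)$ of finite length for $q\geq d-i-1$.

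With a free resolution $F_\bullet\to M$, I would set up the Grothendieck spectral sequence
$$E_2^{p,q}=\HH{p}{\fm}{\Ext^q_R(M,\omega)}\Longrightarrow R^{p+q}(\Gamma_\fm\circ\Hom_R(M,-))(\omega);$$
the composition is allowed because $\Hom_R(M,-)$ preserves injectives for finitely generated $M$. The companion spectral sequence $\Ext^p_R(M,\HH{q}{\fm}{\omega})$ collapses at $E_2$, using that $\HH{j}{\fm}{\omega}=E_R(k)$ for $j=d$ and vanishes otherwise, and shows that the abutment is $M^\vee$ concentrated in total degree $d$. The support bound from the first paragraph translates into $E_2^{p,q}=0$ whenever $p,q\geq 1$ with $p+q>d-i-1$, so a direct inspection of the source and target of every differential touching either $E_r^{0,d-i}$ or $E_r^{d-i+1,0}$ shows that all such differentials vanish except $d_{d-i+1}^{0,d-i}\colon\Ext^{d-i}_R(M,\omega)\to\HH{d-i+1}{\fm}{\Hom_R(M,\omega)}$. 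When $i\geq 2$, both $E_\infty^{0,d-i}$ and $E_\infty^{d-i+1,0}$ lie in total degrees strictly below $d$ and so must vanish, which forces this last differential to be an isomorphism and yields the first statement.

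For $i=1$, the same argument makes $d_d^{0,d-1}$ injective; but now $E_\infty^{d,0}\cong\HH{d}{\fm}{\Hom_R(M,\omega)}/\Ext^{d-1}_R(M,\omega)$ sits as the bottom piece of the filtration of $M^\vee$, so the edge map $\HH{d}{\fm}{\Hom_R(M,\omega)}\to M^\vee$ has kernel $\Ext^{d-1}_R(M,\omega)$. The final identification uses that dualizing the surjection $F_0\twoheadrightarrow M$ embeds $M^\vee$ as $\ker(F_0^\vee\to F_1^\vee)$ inside $F_0^\vee=\HH{d}{\fm}{\Hom_R(F_0,\omega)}$, so the edge map factors through the natural map $\HH{d}{\fm}{\Hom_R(M,\omega)}\to\HH{d}{\fm}{\Hom_R(F_0,\omega)}$ via this injection, making the two kernels coincide. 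The main obstacle is to verify carefully that the dimension bound on the middle Ext modules provided by the $S_{i+1}$ hypothesis is precisely strong enough to force the vanishing of every differential in the spectral sequence other than the single one yielding the desired isomorphism.
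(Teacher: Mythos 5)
Your proposal is correct, and it takes a genuinely different route from the paper. The paper proceeds by an explicit dévissage on syzygies: it first establishes a Claim identifying $\Ext^1_R(N,\omega)$ with $\HH{2}{\fm}{\Hom_R(N,\omega)}$ (or, when $d=2$, with the stated kernel) for $N$ MCM on the punctured spectrum, via the depth lemma applied to the dualized resolution; it then writes $\Ext^{d-i}_R(M,\omega)\cong\Ext^1_R(\Omega^{d-i-1}M,\omega)$ and climbs back down through the syzygy exact sequences, at each step using the dimension bound $\dim\Ext^1_R(\Omega^tM,\omega)\ls d-i-t-2$. Your spectral-sequence argument packages exactly the same dévissage into one object: the Grothendieck spectral sequence $\HH{p}{\fm}{\Ext^q_R(M,\omega)}\Rightarrow R^{p+q}(\Gamma_\fm\circ\Hom_R(M,-))(\omega)$, whose abutment is $M^\vee$ in degree $d$ by the companion spectral sequence, with the same Serre-condition dimension bounds translated to $E_2^{p,q}=0$ for $q\gs 1$, $p\gs 1$, $p+q>d-i-1$. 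The differential analysis you outline does go through: the only possibly nonzero differential touching $E^{0,d-i}$ or $E^{d-i+1,0}$ is $d_{d-i+1}$, and when $i\gs 2$ both $E_\infty$ terms sit below total degree $d$ and vanish. For $i=1$, the edge-map/naturality argument identifying $\ker\big(\HH{d}{\fm}{\Hom_R(M,\omega)}\to M^\vee\big)$ with the kernel of the map to $\HH{d}{\fm}{\Hom_R(F_0,\omega)}$ is correct because the diagram of spectral sequences induced by $F_0\twoheadrightarrow M$ commutes, the spectral sequence for $F_0$ degenerates with edge map an isomorphism, and $M^\vee\hookrightarrow F_0^\vee$ is injective. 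The spectral-sequence route buys a cleaner, more uniform treatment of all $i$, at the cost of requiring the reader to trust the spectral-sequence bookkeeping; the paper's approach is more elementary and self-contained. One small correction: your justification ``$\Hom_R(M,-)$ preserves injectives for finitely generated $M$'' is false (e.g.\ $\Hom_{k[x]}(k,E(k))\cong k$ is not injective); what is actually needed, and true, is that $\Hom_R(M,I)$ is $\Gamma_\fm$-acyclic for $I$ injective and $M$ finitely generated, since each $\Hom_R(M,E(R/\fp))$ is either $\fm$-torsion (when $\fp=\fm$) or has an element of $\fm$ acting invertibly (when $\fp\neq\fm$).
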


\begin{proof} We begin the proof with the following claim. 

\

\noindent {\it {\bf Claim:} Let $N$ be a $($graded$)$ $R$-module that is Maximal Cohen-Macaulay $($MCM$)$ locally on $\Spec R\setminus \{\fm\}$ and  
$\cdots\to  F_{ 0} \rightarrow N\to 0$ a $($graded$)$ free resolution of $N$. Then,  $\Ext^1_R(N, \omega)\cong \ker\big(\HH{2}{\fm}{\Hom_R(N,\omega)}\rightarrow \HH{2}{\fm}{\Hom_R(F_0,\omega)}\big)$ if $d=2$, and $\Ext^1_R(N, \omega)\cong \HH{2}{\fm}{\Hom_R(N, \omega)}$ if $d\gs 3$.}

In order to prove this claim, we consider the $R$-modules $K$ and $C$ that fit in the following two exact sequences 
\begin{equation}\label{qq1}
 0\rightarrow\Hom_R(N,\omega)\rightarrow \Hom_R(F_0,\omega)\rightarrow C\rightarrow 0 \quad \quad\text{and}
 \end{equation}
 \begin{equation}\label{qq2}
0\rightarrow K\rightarrow \Hom_R(F_1,\omega)\rightarrow\Hom_R(F_2,\omega).
\end{equation}
By applying the depth lemma to \eqref{qq2} we obtain $\depth K\gs 2$, therefore $$\Ext^1_R(N,\omega)=\HH{0}{\fm}{\Ext^1_R(N,\omega)}=\HH{0}{\fm}{K/C}\cong \HH{1}{\fm}{C}$$ where the first equality follows by the assumption on $N$.  Hence, the conclusion of the claim  follows  from \eqref{qq1}.

\

Now, back to the original statement, we note that the result follows by the claim and local duality \cite[3.6.19]{BH} if $d=2$, then we may assume $d\gs 3$. Let $\Omega^nM$ be the $n$th-syzygy module of $M$.  Again by local duality and the claim we have 
\begin{equation}\label{rr1}
\HH{i}{\fm}{M}^\vee\cong \Ext_R^{d-i}(M,\omega)\cong \Ext^1_R(\Omega^{d-i-1}M,\omega)\cong \HH{2}{\fm}{\Hom_R(\Omega^{d-i-1}M,\omega)}.
\end{equation}
Let $0\ls t\ls d-i-2$, by assumption $\Omega^tM$ is MCM in codimension $i+t+1$,  which implies that $\dim \Ext^1_R(\Omega^{t}M,\omega)<d-i-t-1$. Let $\cdots\to  F_{ 0} \rightarrow M\to 0$ be a (graded) free resolution of $M$. From the exact sequence 
$$0\rightarrow\Hom_R(\Omega^{t}M,\omega)\rightarrow\Hom_R(F_t,\omega)\rightarrow\Hom_R(\Omega^{t+1}M,\omega)\rightarrow \Ext^1_R(\Omega^{t}M,\omega)\rightarrow 0$$
we obtain 
\begin{equation}\label{rr2}
\HH{d-i-t}{\fm}{\Hom_R(\Omega^{t+1}M,\omega)}\cong \HH{d-i-t+1}{\fm}{\Hom_R(\Omega^{t}M,\omega)},\,\,\,\,\,\text{ if } i+t\gs 2,
\end{equation}
and
\begin{equation}\label{rr3}
\HH{d-1}{\fm}{\Hom_R(\Omega^{1}M,\omega)}\cong \ker\big(\HH{d}{\fm}{\Hom_R(M,\omega)}\rightarrow \HH{d}{\fm}{\Hom_R(F_0,\omega)}\big).
\end{equation}
The statement now follows from \eqref{rr1}, \eqref{rr2}, and \eqref{rr3}.
\end{proof}

Given an $R$-module $M$, we denote by $\Gamma(M)$ the {\it divided powers algebra} of $M$  \cite[Appendix 2]{E}. We set $M^*:=\Hom_R(M,R)$ and for a graded $R$-algebra $S=\oplus_{n\in \NN}S_n$, we denote by $S^*:=\oplus_{n\in \NN}S_n^*$ the {\it graded dual} of $S$.

We need the following technical lemma for the proof of our main result.

\begin{lemma}\label{symm}
Let $R$ be a commutative $($graded$)$ ring and $M$ a $($graded$)$ $R$-module. Then there exist natural $($graded$)$ maps 
$$\Sym(M^*)\xrightarrow{\alpha} \Gamma(M^*)\xrightarrow{\beta} \Sym(M)^*.$$
Moreover, $\alpha$ is an isomorphism if $R$ contains the field of rational numbers and $\beta$ is an isomorphism if $M$ is free. 
\end{lemma}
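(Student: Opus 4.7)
I would construct both maps using the universal properties of the symmetric and divided power algebras, and then verify the isomorphism claims by reducing either to a single algebra identity or to a basis computation.

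For $\alpha$: the divided power algebra $\Gamma(M^*)$ is a graded commutative $R$-algebra containing $M^*$ in degree one. By the universal property of $\Sym(M^*)$, the inclusion $M^*\hookrightarrow \Gamma(M^*)$ extends uniquely to a graded $R$-algebra map $\alpha\colon \Sym(M^*)\to \Gamma(M^*)$, which on pure powers is given by $f^n\mapsto n!\,f^{[n]}$ (reflecting the identity $f\cdot f=2f^{[2]}$, and more generally $f^n=n!\,f^{[n]}$, in $\Gamma$). When $\QQ\subseteq R$, every $n!$ is invertible in $R$, so $\Sym(M^*)$ itself admits a canonical divided power structure defined by $f^{[n]}:=f^n/n!$. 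The universal property of $\Gamma(M^*)$ then produces a graded algebra map $\gamma\colon \Gamma(M^*)\to \Sym(M^*)$ restricting to the identity on $M^*$. Since both compositions $\gamma\circ\alpha$ and $\alpha\circ\gamma$ are graded $R$-algebra endomorphisms fixing $M^*$ pointwise, they coincide with the identity by uniqueness, so $\alpha$ is an isomorphism.

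For $\beta$: I would exploit the Hopf algebra structure on $\Sym(M)$. The diagonal $\Delta(m)=m\otimes 1+1\otimes m$ on $M$ extends to a graded cocommutative comultiplication $\Delta\colon \Sym(M)\to \Sym(M)\otimes_R \Sym(M)$. Dualizing, the graded dual $\Sym(M)^*=\bigoplus_n \Sym^n(M)^*$ inherits a graded commutative $R$-algebra structure, and the binomial identity $\Delta(m^n)=\sum_{k=0}^n\binom{n}{k}m^k\otimes m^{n-k}$ translates, under this pairing, into the defining identities of a divided power structure on $\Sym(M)^*$ with degree one piece $M^*$. By the universal property of $\Gamma(M^*)$, the identity $M^*\to \Sym^1(M)^*$ extends uniquely to a graded algebra map $\beta\colon \Gamma(M^*)\to \Sym(M)^*$ that preserves divided powers. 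Now suppose $M$ is free with basis $\{e_i\}$ and dual basis $\{e_i^*\}$. A direct computation using $\Delta$ shows that $\beta\big(\prod_i (e_i^*)^{[a_i]}\big)$ is the functional dual to the monomial $\prod_i e_i^{a_i}\in \Sym^n(M)$; since these elements range over bases of $\Gamma^n(M^*)$ and $\Sym^n(M)^*$ respectively as $\sum a_i=n$, the map $\beta$ is an isomorphism.

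The main subtlety is the verification that $\Sym(M)^*$ genuinely carries a divided power structure compatible with the degree-one embedding of $M^*$; this rests on the binomial expansion of $\Delta(m^n)$ and a careful translation of the defining identities of a PD-algebra to the dual side. Every other step is a formal consequence of universal properties, together with elementary bookkeeping with monomial bases and the identity $f\cdot f^{[n-1]}=n\,f^{[n]}$ in $\Gamma$.
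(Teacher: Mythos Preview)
Your argument is correct and is essentially the content of the references the paper cites in lieu of a proof: the paper simply points to Roby for the construction and $\QQ$-isomorphism of $\alpha$, and to Eisenbud's Appendix A2 for the construction and free-case isomorphism of $\beta$. Your treatment of $\alpha$ via the universal properties of $\Sym$ and $\Gamma$, together with the PD-structure $f^{[n]}:=f^n/n!$ on $\Sym$ when $\QQ\subseteq R$, is exactly the standard argument; your construction of $\beta$ via the coproduct on $\Sym(M)$ is a slightly more conceptual packaging of Eisenbud's explicit pairing $\Gamma^n(M^*)\otimes \Sym^n(M)\to R$ (equivalently, the identification $\Gamma^n(M^*)=((M^*)^{\otimes n})^{S_n}\to ((M^{\otimes n})^*)^{S_n}=\Sym^n(M)^*$), and your basis check recovers his A2.7(c). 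The one place to be careful, which you flag yourself, is verifying that the algebra structure on $\Sym(M)^*$ dual to $\Delta$ really satisfies all PD-axioms on the full augmentation ideal (not just on degree-one elements); this is routine but requires writing down $\psi^{[n]}$ for $\psi\in\Sym^k(M)^*$ with $k\geq 2$, or else bypassing the issue by defining $\beta$ directly via the $S_n$-invariants description above.
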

\begin{proof}
For the construction and results on $\alpha$, see \cite[Proposition III.3., page 256]{Roby}. See \cite[A2.6 and A2.7(c)]{E} for the corresponding information for $\beta$.
\end{proof}

The following is the main theorem of this section. 
 
\begin{thm}\label{mainVB}
Let $(R,\fm)$ be a Cohen-Macaulay standard
graded algebra over a field $k$ of characteristic zero. Set $d=\dim R\gs  2$. Let $E$ be a  graded $R$-module which is free locally on $\Spec R\setminus \{\fm\}$.
Then there exists an integer $\varepsilon$ such that $$\indeg \HH{i}{\fm}{S^n(E)} \gs -\beta(E^*) n+\varepsilon$$ for every $n\gs 1$ and $ 1\ls i< d$.
\end{thm}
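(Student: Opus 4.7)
The plan is to convert the desired lower bound on $\indeg\HH{i}{\fm}{S^n(E)}$ into an upper bound on the top degree of a related local cohomology module via graded Matlis duality, and then to control that top degree by the linear regularity bound of Theorem~\ref{regModules} applied to $S^n(E^*)$ twisted by the canonical module $\omega$.

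First, dispose of the trivial case where $E$ has finite length, in which case $\HH{i}{\fm}{S^n(E)}=0$ for every $i\ge 1$. Otherwise, since $E$ is locally free on $U:=\Spec R\setminus\{\fm\}$, so is $S^n(E)$, and hence $S^n(E)$ has dimension $d$ and satisfies Serre's condition $S_k$ on $U$ for every $k$. Proposition~\ref{duality} applied to $M=S^n(E)$ then gives
$$\HH{i}{\fm}{S^n(E)}^\vee\cong \HH{d-i+1}{\fm}{\Hom_R(S^n(E),\omega)}$$
for $2\le i\le d-1$, and for $i=1$ it realizes $\HH{1}{\fm}{S^n(E)}^\vee$ as a submodule of $\HH{d}{\fm}{\Hom_R(S^n(E),\omega)}$. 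Using the identity $\indeg N=-\topdeg N^\vee$ for graded modules, the problem reduces to bounding $\topdeg\HH{j}{\fm}{\Hom_R(S^n(E),\omega)}$ from above for $2\le j\le d$, by a linear function of $n$ of slope at most $\beta(E^*)$.

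Next, I would construct a natural graded map
$$\varphi_n\colon S^n(E^*)\otimes_R\omega \longrightarrow \Hom_R(S^n(E),\omega)$$
by composing the degree-$n$ component of the morphism $\Sym(E^*)\to \Sym(E)^*$ from Lemma~\ref{symm} (which is well defined because $\chara k=0$) with the evaluation map $\Hom_R(S^n(E),R)\otimes \omega\to\Hom_R(S^n(E),\omega)$. Since $S^n(E)$ is locally free on $U$, both factors become isomorphisms after restriction to $U$, so the kernel and cokernel of $\varphi_n$ are supported at $\fm$ and hence have finite length. The associated long exact sequences of local cohomology then yield isomorphisms $\HH{j}{\fm}{\Hom_R(S^n(E),\omega)}\cong \HH{j}{\fm}{S^n(E^*)\otimes\omega}$ for every $j\ge 1$. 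Applying Theorem~\ref{regModules} with the module $E^*$ in the role of $E$ and $M=\omega$,
$$\topdeg \HH{j}{\fm}{\Hom_R(S^n(E),\omega)} \le \reg(S^n(E^*)\otimes\omega)-j \le \rho_\omega(E^*)\,n+e-j \le \beta(E^*)\,n+(e-j)$$
for $n\gg 0$ and some integer $e$. Taking negatives, absorbing the bounded quantity $e-j$ (for $j$ in a finite range) and the contribution of the finitely many initial values of $n$ into a single constant, produces the desired $\varepsilon$ with $\indeg\HH{i}{\fm}{S^n(E)}\ge -\beta(E^*)n+\varepsilon$ for all $n\ge 1$ and $1\le i\le d-1$.

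The main technical obstacle is verifying that $\ker\varphi_n$ and the cokernel of $\varphi_n$ really have finite length. This is precisely where both the characteristic zero hypothesis and the local freeness of $E$ on the punctured spectrum are indispensable: the former makes $\Sym(E^*)\to \Gamma(E^*)$ an isomorphism in Lemma~\ref{symm}, and the latter makes both $\Gamma^n(E^*)\to S^n(E)^*$ and the subsequent evaluation morphism $\Hom_R(S^n(E),R)\otimes\omega\to\Hom_R(S^n(E),\omega)$ isomorphisms after restriction to $U$. The remaining issues—the $i=1$ case via the submodule in Proposition~\ref{duality}, a possible grading shift in $\omega$, and absorbing finitely many constants into $\varepsilon$—are routine bookkeeping.
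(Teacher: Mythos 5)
Your proof is correct and uses essentially the same approach as the paper: the same three ingredients (the duality Proposition~\ref{duality}, Lemma~\ref{symm} via characteristic zero and local freeness, and the regularity bound of Theorem~\ref{regModules} applied to $S^n(E^*)\otimes\omega$). The only differences are bookkeeping — you apply Proposition~\ref{duality} directly to $M=S^n(E)$ and then compare $\Hom_R(S^n(E),\omega)$ with $S^n(E^*)\otimes\omega$ via the map $\varphi_n$, whereas the paper first rewrites $S^n(E^*)^*\cong\Hom_R(S^n(E^*)\otimes\omega,\omega)$, applies duality to $S^n(E^*)\otimes\omega$, and separately compares $S^n(E)$ with $S^n(E)^{**}$; and you treat $i=1$ uniformly through the kernel statement of Proposition~\ref{duality}, while the paper gives a separate argument for $i=1$ via the finite-length cokernel of $S^n(E)\to S^n(E)^{**}$.
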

\begin{proof} 

First, assume $i\gs 2$. Let $\omega$ be the canonical module of $R$.
By Hom-Tensor adjointness and the isomorphism $R\cong \Hom_R(\omega,\, \omega)$, we have $S^n(E^*)^*\cong \Hom_R(S^n(E^*)\otimes_R \omega, \omega).$ 

 By the assumption we have $S^n(E^*)\otimes_R \omega$ is MCM locally on $\Spec R\setminus \{\fm\}$, therefore the natural map $$S^n(E^*)\otimes_R \omega\rightarrow  \Hom_R(\Hom_R( S^n(E^*)\otimes_R \omega,\, \omega),\, \omega)$$
 is an isomorphism locally on $\Spec R\setminus \{\fm\}$. Hence,  by Proposition \ref{duality}
$$
\HH{i}{\fm}{S^n( E^*)^*}\cong \HH{i}{\fm}{\Hom_R( S^n(E^*)\otimes_R \omega,\, \omega)}\cong \HH{d-i+1}{\fm}{S^n(E^*)\otimes_R \omega }^\vee,
$$
By  Theorem \ref{regModules}, we have $$\topdeg \HH{d-i+1}{\fm}{S^n(E^*) \otimes_R \omega }\ls \beta(E^*) n-\varepsilon$$ for some $\varepsilon \in \ZZ$ and every $n\gs 1$. Therefore $ \indeg \HH{i}{\fm}{S^n( E^*)^*}\gs -\beta(E^*) n+\varepsilon$ for every $n\gs 1$ and $i\gs 2$. 
  The map $\Sym(E^*)\xrightarrow{\beta\circ\alpha} \Sym(E)^*$ in Lemma \ref{symm} (with $M=E$) is an  isomorphism locally on $\Spec R\setminus \{\fm\}$, hence 
\begin{equation}\label{isomSym}
S^n( E^*)^*\cong S^n( E)^{**}. 
\end{equation}
The result now follows for $i\gs 2$ by observing that $\HH{i}{\fm}{S^n( E)}\cong \HH{i}{\fm}{S^n( E)^{**}}
$ as $S^n(E)$ is free, hence reflexive, locally on $\Spec R\setminus \{\fm\}$.

Now, we show the statement for $i=1$. Fix $n\gg 0$ and consider the short exact sequence $$0\to \Ker(\varphi) \to S^n(E)\xrightarrow{\varphi} S^n(E)^{**}\to \C\to 0.$$
Since $\varphi$ is an isomorphism locally on $\Spec R\setminus \{\fm\}$, we have $\dim \Ker (\varphi) = \dim \C = 0$. Then $\HH{1}{\fm}{S^n(E)}\cong\C$, as $\depth S^n( E)^{**}\gs 2$. Therefore, $$\indeg \HH{1}{\fm}{S^n(E)} = \indeg \C \gs \indeg S^n(E)^{**}=\indeg S^n( E^*)^*,$$
where the last equality follows from \eqref{isomSym}.  Let $\oplus_{i=1}^u R(-a_i)\rightarrow S^n( E^*)\rightarrow 0$ be the first map of a minimal homogeneous resolution of $S^n( E^*)$, where $u=\mu(S^n( E^*))$. Then, $S^n( E^*)^*\hookrightarrow \oplus_{i=1}^u R(a_i)$. We conclude $$\indeg S^n( E^*)^*\gs -\max_i\{a_i\}\gs -\reg (S^n( E^*))\gs -\beta(E^*)n+\varepsilon,$$
for some $\varepsilon\in \ZZ$ and $n\gs 1$ by Theorem \ref{regModules}.
\end{proof}

Assume  $E$ is a graded submodule of a free graded $R$-module $F=\oplus_{i=1}^\gamma R(-d_i)$. We have the natural map of symmetric algebras 
$$\Sym(E)\rightarrow \Sym(F)=R[T_1,\ldots,T_\gamma],$$ where each $T_i$ has bidegree $(d_i, 1)$. The image of this map is the bi-graded algebra$$\R[E]:=\oplus_{n\in \NN}E^n\subset R[T_1,\ldots,T_\gamma].$$ The ring $\R[E]$ is called the {\it Rees algebra} of $E$ with respect to the embedding $E\subset F$. 
It is known that if $E$ has a {\it rank}, i.e., $E_P$ is free of constant rank for every $P\in \Ass(R)$, then $\R[E]$ is isomorphic fo $\Sym(E)/(\text{$R$-torsion})$ and hence it is independent of the graded embedding of $E$ into a free module (\cite{EHU}).

\begin{cor}
Let $(R,\fm,k)$ and $E$ be as in Theorem \ref{mainVB}.  Assume that $E$   has  a rank, then  $$\indeg \HH{i}{\fm}{E^n} \gs  -\beta(E^*)n+\varepsilon$$
for some $\varepsilon\in \ZZ$ and every $n\gs 1$, $1\ls  i< d$.
\end{cor}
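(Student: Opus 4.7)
The plan is to reduce the statement to Theorem \ref{mainVB} by comparing $E^n$ with $S^n(E)$. Since $E$ has a rank, the fact recalled just before the corollary gives that $\R[E]\cong \Sym(E)/(R\text{-torsion})$, so for each $n\gs 1$ there is a natural graded surjection
\[
\pi_n: S^n(E)\twoheadrightarrow E^n,
\]
whose kernel $T_n$ is the $R$-torsion submodule of $S^n(E)$. The first step is to observe that $T_n$ has finite length. Indeed, for any prime $\fp\in \Spec R\setminus \{\fm\}$ the module $E_\fp$ is free by hypothesis, hence $S^n(E)_\fp=(E^n)_\fp$ and thus $(T_n)_\fp=0$. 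Therefore $\Supp T_n\subseteq \{\fm\}$ and $\dim T_n\ls 0$.

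The second step is to transfer the bound from $S^n(E)$ to $E^n$ via the long exact sequence in local cohomology associated to
\[
0\to T_n\to S^n(E)\xrightarrow{\pi_n} E^n\to 0.
\]
Since $\dim T_n\ls 0$, we have $\HH{i}{\fm}{T_n}=0$ for every $i\gs 1$, whence the connecting maps produce graded isomorphisms $\HH{i}{\fm}{S^n(E)}\cong \HH{i}{\fm}{E^n}$ for all $i\gs 1$ and all $n\gs 1$.

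The final step is a direct appeal to Theorem \ref{mainVB}: there exists $\varepsilon\in\ZZ$ (independent of $n$) such that
\[
\indeg \HH{i}{\fm}{E^n}=\indeg \HH{i}{\fm}{S^n(E)}\gs -\beta(E^*)n+\varepsilon
\]
for every $n\gs 1$ and every $1\ls i<d$. There is no serious obstacle here; the only nontrivial point is verifying that the $R$-torsion $T_n$ is supported only at $\fm$, which is an immediate consequence of the locally-free hypothesis on $E$ together with the existence of a rank.
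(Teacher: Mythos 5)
Your proof is correct and takes essentially the same route as the paper: both identify $E^n$ with $S^n(E)$ modulo an $R$-torsion submodule supported at $\fm$, conclude $\HH{i}{\fm}{E^n}\cong \HH{i}{\fm}{S^n(E)}$ for $i\gs 1$, and invoke Theorem~\ref{mainVB}. You simply spell out the long exact sequence argument that the paper leaves implicit.
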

\begin{proof}
Since $E$ has a rank, and is free locally on $\Spec R  \setminus  \{\fm\}$, for every $n\gs 1$ we have  $E^n\cong S^n(E)/(R\text{-torsion})$ and this $R$-torsion submodule is supported on $\{\fm\}$.
 Therefore,
 $\HH{i}{\fm}{E^n}=\HH{i}{\fm}{S^n(E)}$ for every $i\gs 1$   and the statement follows from Theorem \ref{mainVB}.
\end{proof}

\begin{cor}\label{conormal}
Let $(R,\fm)$ be a standard graded algebra over a field $k$ of characteristic zero. Let $I$ be a homogeneous $R$-ideal such that $S=R/I$  is Cohen-Macaulay. Assume $I_\fp$ is generated by a regular sequence in $R_\fp$  for every $\fp\in \Spec(R)\setminus \{\fm\}$  and that $\dim S\gs 2$. Let $E=I/I^2$ be the {\it conormal module} of $I$ and $E^*=\Hom_{S}(E, S)$, then

\begin{enumerate}

\item if $\beta(E^*)\ls0$,  there exists $C\in \ZZ$ such that $$\indeg \HH{i}{\fm}{R/I^n} \gs C$$
for every $n\gs 1$ and $1\ls  i< \dim R/I$;

\item if $\beta(E^*)>0$, then there exists $\varepsilon\in \ZZ$ such that $$\indeg \HH{i}{\fm}{R/I^n} \gs -\beta(E^*)n+\varepsilon$$
for every $n\gs 1$ and $1\ls  i< \dim R/I$.
\end{enumerate}
 
\end{cor}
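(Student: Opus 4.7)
The plan is to reduce the corollary to Theorem \ref{mainVB} applied to the conormal module $E = I/I^2$, viewed as a finitely generated graded module over $S = R/I$. The crucial observation is that, since $I_\fp$ is generated by a regular sequence for every $\fp \in \Spec R \setminus \{\fm\}$, the module $E_\fp$ is $S_\fp$-free of rank $\height(I_\fp)$ and the natural surjection $\pi_n : S^n(E) \twoheadrightarrow I^n/I^{n+1}$ is an isomorphism locally on $\Spec S \setminus \{\fm_S\}$, where $\fm_S = \fm/I$; indeed, on the complete intersection locus, $\gr_{I_\fp}(R_\fp)$ is a polynomial ring over $S_\fp$. Consequently $E$ is free on the punctured spectrum of $S$, and the kernel $K_n$ of $\pi_n$ has finite length for every $n \geq 1$.

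I would then apply Theorem \ref{mainVB} with $(R,\fm)$ replaced by $(S,\fm_S)$, which is permissible since $S$ is a Cohen-Macaulay standard graded algebra over a field of characteristic zero with $\dim S \geq 2$. This yields an integer $\varepsilon$ such that $\indeg \HH{i}{\fm}{S^n(E)} \geq -\beta(E^*) n + \varepsilon$ for all $n \geq 1$ and $1 \leq i < \dim S$; here I rely on the fact that local cohomology at $\fm$ agrees with local cohomology at $\fm_S$ on any $S$-module. Since $K_n$ has finite length, $\HH{j}{\fm}{K_n} = 0$ for $j \geq 1$, so the long exact sequence associated to $0 \to K_n \to S^n(E) \to I^n/I^{n+1} \to 0$ furnishes $\HH{i}{\fm}{S^n(E)} \cong \HH{i}{\fm}{I^n/I^{n+1}}$ for $i \geq 2$ and a surjection for $i = 1$. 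In either case $\indeg \HH{i}{\fm}{I^n/I^{n+1}} \geq -\beta(E^*) n + \varepsilon$.

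Finally, I would induct on $n$ via the short exact sequences $0 \to I^n/I^{n+1} \to R/I^{n+1} \to R/I^n \to 0$. The base case $n = 1$ is immediate since $S$ is Cohen-Macaulay of dimension at least $2$, forcing $\HH{i}{\fm}{R/I} = 0$ for $i < \dim S$. The induction step extracts from the long exact sequence the bound
$$\indeg \HH{i}{\fm}{R/I^{n+1}} \geq \min\bigl\{\indeg \HH{i}{\fm}{I^n/I^{n+1}},\, \indeg \HH{i}{\fm}{R/I^n}\bigr\}.$$
If $\beta(E^*) \leq 0$ the first term is bounded below by the constant $\varepsilon$, and induction gives (1) with $C = \varepsilon$; if $\beta(E^*) > 0$ the induction produces $\indeg \HH{i}{\fm}{R/I^n} \geq -(n-1)\beta(E^*) + \varepsilon$, which is of the form $-\beta(E^*)n + \varepsilon'$ with $\varepsilon' = \varepsilon + \beta(E^*)$, proving (2). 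The step I expect to be the main obstacle is the identification of $\Sym(E)$ with the associated graded object $\bigoplus_n I^n/I^{n+1}$ on the punctured spectrum, which rests on the local complete intersection hypothesis and the classical description of $\gr_I$ for regular sequences; once this is in hand, everything else is routine long-exact-sequence bookkeeping.
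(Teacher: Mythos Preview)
Your proposal is correct and follows essentially the same route as the paper: apply Theorem \ref{mainVB} to the conormal module $E=I/I^2$ over $S$, use the local complete intersection hypothesis to identify $\HH{i}{\fm}{S^n(E)}$ with $\HH{i}{\fm}{I^n/I^{n+1}}$ for $i\gs 1$, and then induct via the short exact sequences $0\to I^n/I^{n+1}\to R/I^{n+1}\to R/I^n\to 0$. The only minor remark is that your ``surjection for $i=1$'' is in fact an isomorphism, since $\HH{2}{\fm}{K_n}=0$ as well; this has no effect on the argument.
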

\begin{proof}
By assumption $ E$ is  a $R/I$-module that is free locally on $\Spec R\setminus \{\fm\}$. Since the natural epimorphism $S^n(E)\twoheadrightarrow I^n/I^{n+1}$ is an isomorphism locally on $\Spec R\setminus \{\fm\}$, we have $\HH{i}{\fm}{S^n(E)}\cong \HH{i}{\fm}{I^n/I^{n+1}}$ for $i\gs 1$. The conclusion now follows from Theorem \ref{mainVB} and by induction on $n$ via the inequality 
$$\indeg \HH{i}{\fm}{R/I^{n+1}} \gs \min\{\indeg \HH{i}{\fm}{I^n/I^{n+1}},\,\indeg \HH{i}{\fm}{R/I^{n}}\}$$ for $n\gs 1$.
\end{proof}

\begin{example}\label{limKnown}
Let $R=k[x,y,u,v]/(xu^t-yv^t)$ for some $t\gs 1$ and $\chara k = 0$. Let  $I=(x,y)$ and notice that $I$ and the ring $S=R/I$ satisfy the assumptions of Corollary \ref{conormal}. The graded free resolution of $I/I^2$ is
$$0\rightarrow S(-1-t) 
\xrightarrow{
\begin{bmatrix} u^t\\-v^t\end{bmatrix}
}
S^2(-1)
\rightarrow
 I/I^2\rightarrow 0.$$
 Therefore,  $(I/I^2)^*$ is isomorphic to the kernel of the map 
$S^2(1) 
\xrightarrow{
\begin{bmatrix} u^t&-v^t\end{bmatrix}
}
S(1+t)
$, which is generated by $\begin{bmatrix} v^t\\u^t\end{bmatrix}$. Hence $\beta((I/I^2)^*)=t-1$.

If $t\gs 2$, by Corollary \ref{conormal} we have $\indeg \HH{1}{\fm}{R/I^n} \gs  -(t-1)n+\varepsilon$  for some  $\varepsilon\in\ZZ$ and every $n\gs 1$. On the other hand, computing $ \HH{1}{\fm}{R/I^n}$ via the \u{C}ech complex of the system of parameters $\{u,v\}$ of the ring $R/I^n$, we obtain that the class of $[ \frac{x^{n-1}}{v^{tn-t}} , \, \frac{y^{n-1}}{u^{tn-t}} ]$ is nonzero and has degree $-(t-1)n+(t-1)$. We conclude that $$-(t-1)n+(t-1)\gs \indeg \HH{1}{\fm}{R/I^n} \gs -(t-1)n+\varepsilon$$ for every $n\gs 1$. Therefore, $$\lim_{n\rightarrow \infty}\frac{ \indeg \HH{1}{\fm}{R/I^n}}{n}=-(t-1).$$

Now, if $t=1$ we have $\beta((I/I^2)^*)=0$ and hence $\{\indeg \HH{1}{\fm}{R/I^n}\}_{n\in \NN}$ is bounded below by a constant.  

In fact, computations with Macaulay2 \cite{GS} suggests that the sequence $\{\indeg \HH{1}{\fm}{R/I^n}\}_{n\in \NN}$ in Example \ref{limKnown} agrees with the linear function $-(t-1)(n-1)$ for each $t\gs 1$ and   $n\gs 2$. We record some of these values below.

\begin{table}[ht]
\centering
\begin{tabular}{c| c c c c c c c c c c c c c c c c}
\backslashbox{$t$}{$n$} & 2 & 3 & 4 & 5 & 6 & 7 & 8 & 9 & 10 & 11 & 12 & 13 & 14 & 15 & 16   \\
 \hline
1 & 0 & 0 & 0  & 0 & 0 & 0 & 0 & 0 & 0 & 0 & 0 & 0 & 0 & 0 & 0  \\
 2 & -1 & -2 & -3 & -4 & -5 & -6 & -7 & -8 & -9 & -10 & -11 & -12 & -13 & -14 & -15 \\
  3 &   -2 & -4 & -6 & -8 & -10 & -12 & -14 & -16 & -18 & -20 & -22 & -24 & -26 & -28 & -30 \\
   4 &  -3 & -6 & -9 & -12 & -15 & -18 & -21 & -24 & -27 & -30 & -33 & -36 & -39 & -42 & -45  \\
     5 &   -4 & -8 & -12 & -16 & -20 & -24 & -28 & -32 & -36 & -40 & -44 & -48 & -52 & -56 & -60 \\
\end{tabular}
\label{sequences}
\end{table}
\end{example}

\begin{example}\label{maxMinors}
Let $X$ be a $2\times 3$ generic matrix and $ R=k[X]$  with  $\chara k = 0$. Let $I=I_2(X)$ the ideal generated by the $2\times 2$ minors of $X$, then $R/I$ is  Cohen-Macaulay of dimension 4, and $\Proj R/I$ is lci.  
Using Macaulay2 \cite{GS} we obtain $\beta((I/I^2)^*) =-1$, therefore by Corollary \ref{conormal} the sequence $\{\indeg \HH{3}{\fm}{R/I^n}\}_{n\in \NN}$ is bounded below by a constant. Indeed, $\indeg \HH{3}{\fm}{R/I^n} = 0 $ for every $n\gs 2$ \cite[5.1]{BBLSZ}.
\end{example}

In the following example we demonstrate  that the lower bound $C$ in Corollary \ref{conormal} (1) may be negative.

\begin{example}
Let $R=k[x,y,z,u,v,w]/(x^2u^2+y^2v^2+z^2w^2)$ with  $\chara k = 0$ and  let  $I=(x,y,z)$. Computations with Macaulay2 \cite{GS} show that $\beta((I/I^2)^*) =-1$ and suggest that $\indeg \HH{i}{\fm}{R/I^n}=-2$ for every $n \gs 3$.
\end{example}

In the following example we observe that, even when the ring $R$ is regular, the sequence $\{\indeg \HH{i}{\fm}{R/I^n}\}_{n\in \NN}$ may have linear behavior  with negative slope.

\begin{example}
Let $R=k[x,y,u,v]$ with $\chara k = 0$ and $I = (x^2u-y^2v,u^2,uv,v^2)$. Computations with Macaulay2 \cite{GS} show that $\beta((I/I^2)^*) =2$ and suggest that $\indeg \HH{i}{\fm}{R/I^n}=-2n+1$ for every $n \gs 1$.
\end{example}

In \cite[6.1]{EMS}, Eisenbud, Musta\c{t}\u{a}, and Stillman asked for which ideals $I$ in a polynomial ring $R$ over a field $k$, there exists a decreasing chain of ideals $\{I_n\}_{n\in \NN}$, cofinal with the regular powers $\{I^n\}_{n\in \NN}$, such that the natural map 
$$\Ext_R^i(R/I_n,R)\rightarrow
\lim_{\longrightarrow}\Ext_R^i(R/I_n,R)=\HH{i}{I}{R}$$
is injective for every $i$ and $n$. In \cite[1.2]{BBLSZ} the authors provide a partial answer to this question, showing that if $I$ is a homogeneous prime ideal such that $\Proj R/I$ is smooth, and $\chara k = 0$, then for each $i\in \NN$ and $j\in \ZZ$ the map $$\Ext_R^i(R/I^n,R)_j\rightarrow
\HH{i}{I}{R}_j$$
is injective for  $n\gg 0$. Part  (3) of the following corollary provides another partial answer to the  question above for a class of ideals in Gorenstein rings. Part (1) and (2) are closely related to \cite[1.1]{BBLSZ}.

\begin{cor}\label{stabExt}
Let $R$, $I$, and $E$ be as in Corollary \ref{conormal}. Assume $\dim R/I\gs 3$  and $\beta(E^*)<0$, then for any  $D\in \ZZ$, we have
\begin{enumerate}
\item for each $1\ls i\ls  \dim R/I-2$  the natural map  $$\HH{i}{\fm}{R/I^{n+1}}_{\ls D}\rightarrow\HH{i}{\fm}{R/I^{n}}_{\ls D}$$ is an isomorphism for $n\gg 0$; this map is injective for $i=\dim R/I -1$ and $n\gg 0$. 

\item If $R$ is Cohen-Macaulay and $\omega$ is the canonical module of $R$, then for each $\height I +2\ls i<\dim R$ the natural map
$$\Ext_R^i(R/I^n,\omega)_{\gs D}\rightarrow \Ext_R^i(R/I^{n+1},\omega)_{\gs D}$$
is an isomorphism for $n\gg 0$. Furthermore,  this map is injective for $i=\height I$ and $n\gs 1$.

\item  If $R$ is Cohen-Macaulay and $\omega$ is the canonical module of $R$, then for every $i<\dim R$ such that $i\neq \height I +1 $ the natural map
$$\Ext_R^i(R/I^n,\omega)_{\gs D}\rightarrow \HH{i}{I}{\omega}_{\gs D}$$
is injective for $n\gg 0$. In fact the map is injective for $i=\height I$ and $n\gs 1$, and it is an isomorphism for $i\gs \height I+2$ and $n\gg 0$.
\end{enumerate}
\end{cor}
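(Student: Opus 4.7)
The plan is to bootstrap everything from the long exact sequence of local cohomology attached to
$$0\to I^n/I^{n+1}\to R/I^{n+1}\to R/I^n\to 0$$
together with Theorem \ref{mainVB} applied to the conormal module $E=I/I^2$ over $S=R/I$. Since the natural surjection $S^n(E)\twoheadrightarrow I^n/I^{n+1}$ is an isomorphism on $\Spec R\setminus\{\fm\}$, its kernel and cokernel are supported at $\{\fm\}$, and hence $\HH{j}{\fm}{S^n(E)}\cong \HH{j}{\fm}{I^n/I^{n+1}}$ for every $j\gs 1$. The hypothesis $\beta(E^*)<0$ makes the linear lower bound of Theorem \ref{mainVB} tend to $+\infty$ with $n$, so for any fixed $D\in\ZZ$ and any $1\ls j<\dim R/I$ one has $\HH{j}{\fm}{I^n/I^{n+1}}_{\ls D}=0$ for all $n\gg 0$.

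For part (1), I would feed this vanishing into the long exact sequence truncated at degrees $\ls D$:
$$\cdots\to \HH{i}{\fm}{I^n/I^{n+1}}_{\ls D}\to \HH{i}{\fm}{R/I^{n+1}}_{\ls D}\to \HH{i}{\fm}{R/I^n}_{\ls D}\to \HH{i+1}{\fm}{I^n/I^{n+1}}_{\ls D}\to\cdots.$$
When $1\ls i\ls \dim R/I-2$ both flanking terms vanish for $n\gg 0$, giving the claimed isomorphism; when $i=\dim R/I-1$ only the left flank is controlled, giving injectivity.

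For part (2), graded local duality identifies $\HH{j}{\fm}{R/I^n}^{\vee}\cong \Ext_R^{\dim R-j}(R/I^n,\omega)$, interchanging the truncations $(\cdot)_{\ls D}$ and $(\cdot)_{\gs -D}$ and swapping injective/surjective of the induced maps. Since $R$ is Cohen-Macaulay, $\dim R=\height I+\dim R/I$, so the range $\height I+2\ls i<\dim R$ in (2) corresponds to $1\ls j\ls \dim R/I-2$ in (1), and the isomorphism claim of (2) is the Matlis dual of (1). For the injectivity claim at $i=\height I$, set $j=\dim R/I$; the dual statement is surjectivity of $\HH{\dim R/I}{\fm}{R/I^{n+1}}\to \HH{\dim R/I}{\fm}{R/I^n}$, which follows for every $n\gs 1$ from the same long exact sequence because $\HH{\dim R/I+1}{\fm}{I^n/I^{n+1}}=0$ by dimension.

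For part (3), I would invoke the graded identification $\HH{i}{I}{\omega}\cong \lim_{\longrightarrow}\Ext_R^i(R/I^n,\omega)$ together with exactness of filtered colimits of $R$-modules. For $i\gs \height I+2$, part (2) makes the transition maps isomorphisms on degrees $\gs D$ for $n\gg 0$, hence so is each map into the colimit. For $i=\height I$, part (2) gives injectivity of every transition map for $n\gs 1$, which persists to each map into the colimit. The case $i<\height I$ is vacuous since $\grade I=\height I$ forces $\Ext^i_R(R/I^n,\omega)=0$, while the excluded case $i=\height I+1$ is precisely where only surjectivity (not injectivity) of transitions is available, so nothing about injectivity into the colimit can be said. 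The main bookkeeping obstacle is keeping straight the index shift $j=\dim R-i$ and the direction of truncation under duality, together with isolating the two boundary cases where only one of the flanking terms in the long exact sequence can be controlled.
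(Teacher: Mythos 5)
Your proposal is correct and follows essentially the same route as the paper: the identification $\HH{j}{\fm}{I^n/I^{n+1}}\cong\HH{j}{\fm}{S^n(E)}$ together with Theorem~\ref{mainVB} and $\beta(E^*)<0$ gives the vanishing of $\HH{j}{\fm}{I^n/I^{n+1}}_{\ls D}$ for $1\ls j<\dim R/I$ and $n\gg 0$, which is fed into the long exact sequence for part (1), then dualized for (2), and passed to the colimit for (3). Your write-up merely makes explicit the long-exact-sequence bookkeeping and the vacuous case $i<\height I$ that the paper leaves implicit.
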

\begin{proof}
Part (1) follows by Theorem \ref{mainVB}, as the assumptions imply that $\indeg \HH{i}{\fm}{I^n/I^{n+1}} = \indeg \HH{i}{\fm}{S^n(E)}>D$ for $1\ls i\ls \dim R/I-1$ and $n\gg 0$. Part (2) follows from (1) and local duality for the case $i\gs\height I+2$. The injectivity for $i=\height I$ follows by local duality and the epimorphism $$\HH{\dim R/I}{\fm}{R/I^{n+1}}\twoheadrightarrow \HH{\dim R/I}{\fm}{R/I^{n}}$$ for $n\gs 1$.
Now, Part (3) follows from (2) as $\HH{i}{I}{\omega}=\displaystyle\lim_{\longrightarrow}\Ext_R^i(R/I^n,\omega)$ for every $i$.
\end{proof}

A local ring $(S,\fn)$ is said to be {\it cohomologically full} if for every surjection $T\twoheadrightarrow S$ from a local ring $(T,\ffq)$, such that $T_{\text{red}} = S_{\text{red}}$ and $T$ and $S$ have the same characteristic, the natural map 
$\HH{i}{\ffq}{T}\rightarrow \HH{i}{\fn}{S}$
is surjective for every $i$. If $R$ is a standard graded algebra over a field $k$ and irrelevant maximal ideal $\fm$, then we say $R$ is  cohomologically full if the local ring $R_\fm$ is.   For more information and examples of cohomologically full rings see \cite{DDM}.
\vspace{1mm}

The following result answers Question \ref{motivQ}, (2), in a particular case.

\begin{cor}\label{cohFull}
Let $R$, $I$, and $E$ be as in Corollary \ref{stabExt}, and fix an integer $1\ls i\ls \dim R/I-2$.  Assume $R/J$  is cohomologically full for some $R$-ideal $J$ such that $\sqrt{J}=\sqrt{I}$ and $\HH{i}{\fm}{R/J}\neq 0$. Then there exists an integer $C\ls 0$ such that $\indeg \HH{i}{\fm}{R/I^n}=C$ for every $n\gg 0$.
\end{cor}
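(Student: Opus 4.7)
The plan is to combine the cohomological fullness of $R/J$ with the stabilization statement of Corollary \ref{stabExt}(1). Since $\sqrt{J} = \sqrt{I}$, there exists $N \in \NN$ with $I^N \subseteq J$, so for every $n \geq N$ the natural quotient $\pi_n \colon R/I^n \twoheadrightarrow R/J$ is defined. Both rings are $k$-algebras (hence share the characteristic of $k$) and satisfy $(R/I^n)_{\mathrm{red}} = R/\sqrt{I} = R/\sqrt{J} = (R/J)_{\mathrm{red}}$. The hypothesis that $R/J$ is cohomologically full therefore produces a surjection
$$\HH{i}{\fm}{R/I^n} \twoheadrightarrow \HH{i}{\fm}{R/J} \qquad \text{for every } n \geq N.$$
Because $\HH{i}{\fm}{R/J}$ is nonzero, the integer $c := \indeg \HH{i}{\fm}{R/J}$ is finite, and the surjection yields the a priori bound $\indeg \HH{i}{\fm}{R/I^n} \leq c$ for all $n \geq N$.

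Next, I would feed the truncation level $D = c$ into Corollary \ref{stabExt}(1). The assumptions on $R$, $I$, and $E$ carried over from Corollary \ref{stabExt} (in particular $\beta(E^*) < 0$) together with $1 \leq i \leq \dim R/I - 2$ yield an integer $n_0 \geq N$ such that the natural map
$$\HH{i}{\fm}{R/I^{n+1}}_{\leq c} \longrightarrow \HH{i}{\fm}{R/I^n}_{\leq c}$$
is an isomorphism of graded $R$-modules for every $n \geq n_0$. Combined with $\indeg \HH{i}{\fm}{R/I^n} \leq c$, this identifies the graded structure of $\HH{i}{\fm}{R/I^n}$ in degrees $\leq c$ across all $n \geq n_0$; in particular, the set $\{ d \leq c : \HH{i}{\fm}{R/I^n}_d \neq 0 \}$ is independent of $n$, and so $\indeg \HH{i}{\fm}{R/I^n}$ is constant, equal to some $C$ with $C \leq c$, for all $n \geq n_0$.

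The remaining point is the inequality $C \leq 0$. Since we already have $C \leq c$, it suffices to establish $c \leq 0$, i.e.\ that the local cohomology of the cohomologically full ring $R/J$ in cohomological degree $i < \dim R/J$ has non-positive initial degree. I would try to extract this from the general theory of cohomologically full standard graded $k$-algebras developed in \cite{DDM}: morally, cohomological fullness forces $\HH{i}{\fm}{R/J}$ to mimic the local cohomology of a reduced (e.g.\ Stanley--Reisner) ring, for which Hochster-type formulas place all nonzero components in non-positive degrees. Isolating (or supplying) the precise statement that delivers this nonpositivity in the standard graded setting is the main obstacle of the proof. A possible alternative would be a direct route through the long exact sequences attached to $0 \to I^n/I^{n+1} \to R/I^{n+1} \to R/I^n \to 0$, exploiting the Cohen--Macaulayness of $R/I$ (which makes $\HH{j}{\fm}{R/I}$ vanish for every $j < \dim R/I$) to track and constrain the degrees in which the stable quotient of $\HH{i}{\fm}{R/I^n}$ can be supported.
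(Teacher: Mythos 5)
Your overall strategy mirrors the paper's exactly: use the cohomological fullness of $R/J$ together with $I^n\subseteq J$ (for $n\gg 0$) to get surjections $\HH{i}{\fm}{R/I^n}\twoheadrightarrow\HH{i}{\fm}{R/J}$, deduce $\indeg\HH{i}{\fm}{R/I^n}\ls\indeg\HH{i}{\fm}{R/J}$, and then invoke the stabilization of Corollary~\ref{stabExt}(1) in degrees $\ls D$ to conclude that $\indeg\HH{i}{\fm}{R/I^n}$ is eventually constant. All of that is correct and matches the paper's argument.

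The genuine gap is precisely the step you flag as ``the main obstacle'': establishing $\indeg\HH{i}{\fm}{R/J}\ls 0$. You correctly suspect it should follow from the theory in \cite{DDM}, but you leave it open. The paper resolves this in one line by citing \cite[4.9]{DDM}, which gives $\indeg\HH{i}{\fm}{R/J}=0$ once one knows $\HH{i}{\fm}{R/J}$ has finite length (which follows here because it is a quotient of the finite-length module $\HH{i}{\fm}{R/I^n}$; finiteness of the latter comes from the local complete-intersection hypothesis making $R/I^n$ generalized Cohen--Macaulay). Your ``alternative direct route'' via the exact sequences $0\to I^n/I^{n+1}\to R/I^{n+1}\to R/I^n\to 0$ would not straightforwardly yield the nonpositivity: Cohen--Macaulayness of $R/I$ kills $\HH{i}{\fm}{R/I}$ entirely for $i<\dim R/I$, so you would get no information about the internal degrees of the stable value, only about how the modules for different $n$ relate to each other. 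The appeal to \cite[4.9]{DDM} (or an equivalent statement comparing $\HH{i}{\fm}{R/J}$ to $\HH{i}{\fm}{R/\sqrt{I}}$ and then invoking the well-known $\ls 0$ bound for reduced standard graded rings) is therefore an essential, not optional, ingredient, and your proof is incomplete without it.
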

\begin{proof}
 By assumption we have that the map $\HH{i}{\fm}{R/I^n}\rightarrow \HH{i}{\fm}{R/J}$ is surjective for $n\gg 0$. Therefore,  $ \HH{i}{\fm}{R/J}$ has finite length and  
$$\indeg \HH{i}{\fm}{R/I^n}\ls \indeg \HH{i}{\fm}{R/J} =0,$$
where the last equality follows from \cite[4.9]{DDM}. Now, by Corollary \ref{stabExt}, (1) we have that  $\HH{i}{\fm}{R/I^{n+1}}_{\ls 0}\rightarrow\HH{i}{\fm}{R/I^{n}}_{\ls 0}$ is an isomorphism for $n\gg 0$. The conclusion follows.
\end{proof}

\begin{remark}
In the setting of Corollary \ref{cohFull}, let $X = \Proj R/I$. If $i\ls \codim \Sing X$ it was proved in \cite[3.1]{BBLSZ} that $\HH{i}{\fm}{R/I^n}_{<0}=0$ for every $n\gs 1$. 
Hence, if $I^n$ is  cohomologically full for every $n\gg 0$, \cite[4.9]{DDM} shows $\indeg \HH{i}{\fm}{R/I^n}=0$ for $n\gg 0$.
\end{remark}

The following example shows that the assumption on the characteristic  is necessary in Corollary \ref{conormal} and hence in Theorem \ref{mainVB}.

\begin{example}
Let $R$ and $I$ be  as in Example \ref{maxMinors} but assume instead that $R$ has characteristic $p>0$. Moreover, assume the conclusion of Theorem \ref{mainVB} holds in positive characteristic. Computations by Macaulay2 \cite{GS} show $\beta((I/I^2)^*) =-1$ and then by Corollary \ref{stabExt}, (1) we have $$\HH{3}{\fm}{R/I^{n+1}}_0\rightarrow \HH{3}{\fm}{R/I^{n}}_0$$
is injective for $n\gg 0$. Furthermore, by \cite[5.5]{BBLSZ}, $\HH{3}{\fm}{R/I^{n}}_0\neq 0$ for every $n\gs 2$. However, if $n'\gg n\gg0$, there exists $e\in \NN$ such that $I^{n'}\subseteq I^{[p^e]}\subseteq I^n$ and hence $\HH{3}{\fm}{R/I^{n'}}\rightarrow \HH{3}{\fm}{R/I^{n}}$ is the zero map as $R/I^{[p^e]}$ is Cohen-Macaulay, which is a contradiction. 
\end{example}

\section{Polynomial Bound for Homogeneous Ideals}\label{polySec}

Let $I$ be a homogeneous ideal in a standard graded ring over a field. The purpose of this section is to prove that whenever the modules $ \HH{i}{\fm}{R/I^n}$ are Noetherian for $n\gg 0$, the rate of growth of the sequence $\{\indeg \HH{i}{\fm}{R/I^n}\}_{n\in \NN}$ is at most polynomial. The results of this section apply in wide generality and without assumptions on the characteristic of the base field. 

Let $M$ be a Noetherian $R$-module of dimension $d$. We denote by $e_0(M), \ldots, e_d(M)$ the {\it Hilbert coefficients} of $M$, i.e., 
$$\lambda(M/\fm^nM)=\sum_{i=0}^r (-1)^ie_i(M){n+d-i\choose d-i},\qquad \text{ for }n\gg 0,$$
where $\lambda(N)$ denotes the {\it length} of the $R$-module $N$.
\vspace{1mm}



We now present the main theorem of this section.

\begin{thm}\label{polBound}
Let $R$ be a standard graded algebra over a field  $k$ and with irrelevant maximal ideal $\fm$. Let  $I$  be a homogeneous $R$-ideal and set $d=\dim R/I\gs 2$. Assume  $\HH{i}{\fm}{R/I^n}$ is Noetherian for some $1\ls i< d$ and $n\gg 0$. Then  there exists $s\in  \NN$ such that for every $n\gg 0$ we have
$$|\indeg \HH{i}{\fm}{R/I^n}|<n^s.$$
\end{thm}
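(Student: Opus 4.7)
The plan is to combine two ingredients: a linear upper bound on $\topdeg \HH{i}{\fm}{R/I^n}$, and a polynomial length bound coming from Chardin--Ha--Hoa \cite{CHH}.

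For the upper side, by Trung--Wang \cite{TW} the function $\reg(R/I^n)$ is eventually linear in $n$, so $\topdeg \HH{i}{\fm}{R/I^n} \ls \reg(R/I^n) - i \ls Dn$ for some constant $D$ and $n \gg 0$. Since $\indeg \ls \topdeg$ whenever the module is nonzero, this already yields $\indeg \HH{i}{\fm}{R/I^n} \ls Dn < n^s$ for $s \gs 2$ and $n$ large. This takes care of one half of the absolute value.

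For the lower side, the Noetherian hypothesis together with Artinianness of local cohomology gives that $M_n := \HH{i}{\fm}{R/I^n}$ has finite length $\lambda_n$ for $n \gg 0$. The Chardin--Ha--Hoa theorem \cite{CHH} then supplies a polynomial upper bound $\lambda_n \ls q(n)$. To derive a polynomial bound on $|\indeg M_n|$, I would pass to the graded Matlis dual $M_n^\vee$. This is a finitely generated graded $R$-module with $\topdeg(M_n^\vee) = -\indeg M_n$ and Loewy length at most $\lambda_n$. Its generators correspond to socle elements of $M_n$ and thus lie in degrees within $[\indeg M_n,\,\topdeg M_n]$. The elementary inequality
$$\topdeg N \ls \max(\text{generator degrees of } N) + (\text{Loewy length of } N) - 1$$
for a finitely generated graded module $N$ over a standard graded algebra then allows one to bound $-\indeg M_n$ polynomially from the linear estimate $\topdeg M_n \ls Dn$ and the polynomial estimate $\lambda_n \ls q(n)$.

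The hard part will be precisely this final translation. A general finite-length graded module can have its generators distributed across very spread-out degrees, so a length bound alone does not automatically control the initial degree. The argument therefore requires the CHH input to play a stronger role than just bounding $\lambda_n$: either by bounding $|\indeg \HH{i}{\fm}{R/I^n}|$ directly, or by providing enough additional structural information (e.g., on the range of socle degrees, equivalently the range of generator degrees of the Matlis dual) to make the dual calculation go through. I expect the methods of \cite{CHH}, which handle polynomial growth of Hilbert-type invariants of $R/I^n$ uniformly in $n$, to provide precisely the rigidity needed to close this step.
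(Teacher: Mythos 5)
Your upper-bound half (via Trung--Wang and the eventual linearity of $\reg(R/I^n)$) is fine and matches what the paper does at the very end. The gap is in the lower-bound half, and you have correctly diagnosed where it sits: a length bound alone cannot control $\indeg$ of a finite-length graded module, since such a module can be supported in widely separated degrees, and the inequality $\topdeg N \ls (\text{max gen.\ degree of } N) + (\text{Loewy length}) - 1$ is circular in your setup. The maximal generator degree of $M_n^\vee$ equals $-(\text{minimal socle degree of } M_n)$, which can coincide with $-\indeg M_n$, i.e.\ with the very quantity you are trying to bound; so the inequality gives you nothing new.

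The underlying problem is that you have misread the shape of the Chardin--Ha--Hoa input. Their result \cite[3.5]{CHH} does not bound the length $\lambda_n$ of $\HH{i}{\fm}{R/I^n}$; it bounds the Castelnuovo--Mumford \emph{regularity} of the modules $\Ext^{e-i}_S(R/I^n, S)$ over a polynomial ring $S$ in terms of the Hilbert coefficients and the regularity of $R/I^n$. This is exactly what the paper uses, and it bypasses the Matlis-dual/Loewy-length step entirely. Fixing a surjection $S=k[x_1,\dots,x_e]\twoheadrightarrow R$, graded local duality gives
$$\HH{i}{\fm}{R/I^n} \cong \Ext^{e-i}_S(R/I^n,S)^\vee(e),$$
so when $\HH{i}{\fm}{R/I^n}$ is Noetherian (hence finite length) the Ext module is also finite length and
$$\indeg \HH{i}{\fm}{R/I^n} = -\topdeg \Ext^{e-i}_S(R/I^n,S) - e = -\reg\bigl(\Ext^{e-i}_S(R/I^n,S)\bigr) - e.$$
Now \cite[3.5]{CHH} bounds $\reg(\Ext^{e-i}_S(R/I^n,S))$ by $C\bigl(Q_{R/I^n}(\reg(R/I^n))\bigr)^{2^d-2}$ for a polynomial $Q_{R/I^n}$ built from the $|e_j(R/I^n)|$, and the paper assembles this into a polynomial in $n$ using \cite[1.1]{HPV} (the $e_j(R/I^n)$ are eventually polynomial in $n$) and \cite[3.2]{TW} (eventual linearity of $\reg(R/I^n)$). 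Your instinct to pass to a polynomial ring, dualize, and bring in CHH was exactly right; the fix is to apply CHH to the regularity of the Ext module rather than to a length, so that local duality transfers the bound directly to $\indeg$ with no socle-degree argument needed.
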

\begin{proof}
Let $e=\dim_k R_1$. Consider an epimorphism $S := k[x_1,\ldots, x_e]\xtwoheadrightarrow{\varphi} R$ from a  polynomial ring $S$ and  let $\fn = (x_1,\ldots,x_e)\subseteq S$. 
By graded local duality \cite[3.6.19]{BH}  we have a graded isomorphism 
$$
 \HH{i}{\fm}{R/I^n}\cong \HH{i}{\fn}{R/I^n} \cong \Ext^{e-i}_S(R/I^n, S)^{\vee}(e)
$$
for every $n\in \NN$. Therefore, by the assumption the module $\Ext^{e-i}_S(R/I^n, S)$ has finite length for $n\gg 0$, and then
\begin{equation}\label{upsDo}
\indeg  \HH{i}{\fm}{R/I^n}  = -\topdeg  \Ext^{e-i}_S(R/I^n, S)-e= -\reg(  \Ext^{e-i}_S(R/I^n, S))-e.
\end{equation}
For an $R$-module $M$ of dimension $r$, we set $Q_M(n)= \sum_{i=0}^r |e_i(M)|{n+r-i\choose r-i}. $
 For an $R$-ideal $J$, set $\tilde{J}= (0:_{R}\fm^\infty)$.
\vspace{1mm}
 
  Let $r_n=\reg(R/I^n) $ and notice that from the regular sequence 
  $$0\to \tilde{I^n}/I^n\to R/I^n \to R/\tilde{I^n}\to 0$$ we obtain  $\reg(R/\tilde{I^n})\ls r_n $. Therefore,  by \cite[3.5]{CHH}, there exists  $C\in \NN$ such that  
$$\reg(  \Ext^{e-i}_S(R/I^n, S))<C\big(Q_{R/I^n}(r_n)\big)^{2^d-2}.$$
By \cite[1.1]{HPV}, the functions $e_i(R/I^n)$ agree with a polynomial of degree $\ls e-d-i$ for every $i$ and $n\gg 0$, therefore there exists a polynomial in two variables,  $q(n,t)\in \ZZ[n,t]$ of degree at most $e$ in $n$ and of degree $d$ in $t$,  such that $Q_{R/I^n}(t)\ls q(t,n)$ for $t, n\gg 0$. 
Since $r_n$ eventually agrees with a linear function by \cite[3.2]{TW},  it follows that $$\big(Q_{R/I^n}(r_n)\big)^{2^d-2} < D n^{(e+d)(2^d-2)}$$ for some $D\in \NN$ and $n\gg 0$. The conclusion now follows from \eqref{upsDo} and the fact that the sequence $\{\indeg \HH{i}{\fm}{R/I^n}\}_{n\in \NN}$ is bounded above by the linear function $\reg(R/I^n)$ for $n\gg 0$. 
\end{proof}

The previous result can be used to show a polynomial bound for the lengths of local cohomology modules of powers of homogeneous ideals. The next result  can be seen as a partial answer to Question  \cite[7.1]{DM}.

\begin{cor}\label{polynomialLength}
Let $R=k[x_1,\ldots, x_d]$, $\fm=(x_1,\ldots, x_d)$, and $I$  a homogeneous $R$-ideal. Assume $\HH{i}{\fm}{R/I^n}$ is Noetherian for $n\gg 0$, then there exists $t\in  \NN$ such that  $$\limsup_{n\rightarrow \infty} \frac{\lambda(\HH{i}{\fm}{R/I^n})}{n^t}<\infty.$$
\end{cor}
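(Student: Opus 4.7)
The plan is to bound $\lambda(\HH{i}{\fm}{R/I^n})$ by separately controlling the width of the degree range on which it is supported and the dimension of each of its graded components, then multiplying the two estimates. Since $\HH{i}{\fm}{R/I^n}$ is Noetherian (by hypothesis) and always Artinian, it has finite length equal to $\sum_j \dim_k \HH{i}{\fm}{R/I^n}_j$ with only finitely many nonzero terms, so this product strategy suffices.

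For the width: by Theorem \ref{polBound}, $|\indeg \HH{i}{\fm}{R/I^n}| < n^s$ for some $s \in \NN$ and $n \gg 0$, and by \cite[3.2]{TW} the regularity $\reg(R/I^n)$ grows linearly in $n$, so $\topdeg \HH{i}{\fm}{R/I^n} \leq \reg(R/I^n) - i$ is also at most linear in $n$. Thus the number of nonzero graded pieces of $\HH{i}{\fm}{R/I^n}$ is polynomially bounded. For each individual graded piece, I would pass through graded local duality and a minimal free resolution: duality provides the identity $\dim_k \HH{i}{\fm}{R/I^n}_j = \dim_k \Ext^{d-i}_R(R/I^n, R)_{-j-d}$, and if $F_\bullet$ is the minimal graded free resolution of $R/I^n$ with $F_{d-i} = \bigoplus_k R(-b_{d-i,k})$, then $\Ext^{d-i}_R(R/I^n, R)$ is a subquotient of $\Hom_R(F_{d-i}, R) = \bigoplus_k R(b_{d-i,k})$. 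Since each $b_{d-i,k} \leq \reg(R/I^n) + d - i$ is linear in $n$, each $\dim_k \Ext^{d-i}_R(R/I^n, R)_{-j-d}$ is bounded by the product of $\beta_{d-i}(R/I^n)$ and a binomial coefficient that is polynomial in $n$.

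The main obstacle is therefore a polynomial bound on the total Betti number $\beta_{d-i}(R/I^n) = \dim_k \Tor_{d-i}^R(R/I^n, k)$. I plan to handle this by computing $\Tor$ via the Koszul complex on $x_1, \ldots, x_d$, which yields the graded estimate $\beta_{d-i,l}(R/I^n) \leq \binom{d}{d-i} H_{R/I^n}(l-(d-i))$, where $H_{R/I^n}$ is the Hilbert function of $R/I^n$; summing over $l$ in the range $[d-i, \reg(R/I^n) + d - i]$ and using the trivial bound $H_{R/I^n}(m) \leq \binom{m + d - 1}{d - 1}$ produces a polynomial bound on $\beta_{d-i}(R/I^n)$. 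Combining the three ingredients, namely polynomial width, polynomial per-degree contribution, and polynomial Betti number, produces a polynomial upper bound on $\lambda(\HH{i}{\fm}{R/I^n})$ and hence the existence of the desired exponent $t$.
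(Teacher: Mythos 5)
Your proposal is correct, and it takes a genuinely different route from the paper's. The paper passes through Gr\"obner deformation and a comparison result of Sbarra to reduce to a sequence of monomial ideals $J_n$ with $\reg(J_n)=\reg(I^n)$ and with each graded piece of $\HH{i}{\fm}{R/I^n}$ dominated by that of $\HH{i}{\fm}{R/J_n}$; it then invokes Takayama's formula together with the combinatorial bounds of \cite{DM} on the ranks of the relevant simplicial homology groups to get a polynomial bound component by component. Your argument stays entirely in commutative algebra: it uses graded local duality to transfer the problem to $\Ext^{d-i}_R(R/I^n,R)$, realizes that module as a subquotient of $\bigoplus_k R(b_{d-i,k})$ coming from a minimal free resolution, controls the shifts $b_{d-i,k}$ by the linear growth of $\reg(R/I^n)$, and controls the total Betti number $\beta_{d-i}(R/I^n)$ via the Koszul complex bound $\beta_{p,l}(R/I^n)\le \binom{d}{p}H_{R/I^n}(l-p)$ summed over the window $[p,\,\reg(R/I^n)+p]$. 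Both proofs rely on Theorem \ref{polBound} for the polynomial window and on \cite[3.2]{TW} for the linearity of regularity; after that they diverge completely. Your version is more elementary and self-contained in that it avoids initial ideals, Sbarra's inequality, and Takayama's formula, which makes it arguably cleaner; the paper's route, on the other hand, fits naturally with the monomial-ideal machinery already developed in Section \ref{monoSec} and in \cite{DM}. Tracing the exponents, both arguments land in the same ballpark, roughly $t\approx d(s+1)$, where $s$ is the exponent from Theorem \ref{polBound}, so neither is noticeably sharper.
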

\begin{proof}
In this proof we follow Notation \ref{takNot}. By extending the field $k$ we can assume it is infinite.   As in the proof of \cite[5.3]{DM}, using   Gr\"obner deformation and \cite[2.4]{Sba} we can construct a sequence of monomial ideals $J_n$ such that  $\reg(J_n)=\reg(I^n)$ and $\dim_k \HH{i}{\fm}{R/I^n}_{j}\ls \dim_k \HH{i}{\fm}{R/J_n}_{j}$ for every $j\in\ZZ$. Let $\beta\in \NN$ be such that $\reg(I^n)\ls\beta n$ for every $n\gs 1$. By Theorem \ref{polBound}  there exists $s\in \ZZ_{>0}$ such that $\HH{i}{\fm}{R/I^n}_{<-n^s} = 0$, then 
\begin{equation}\label{beqn1}
\lambda(\HH{i}{\fm}{R/I^n})=\lambda(\HH{i}{\fm}{R/I^n}_{\gs -n^s})\ls\lambda(\HH{i}{\fm}{R/J_n}_{\gs -n^s})
\ls \sum_{-n^s \ls |\fa| \ls\beta n} \dim_k \HH{i}{\fm}{R/J_n}_{\fa}.
\end{equation} 
By \cite[5.2]{DM} if $|\fa^+|>\beta n$, then $\HH{i}{\fm}{R/J_n}_{\fa}=0$. Moreover, if $\fa=(a_1,\ldots, a_d)\in \ZZ^d$ satisfies that $-n^s \ls |\fa | \ls\beta n$ and $|\fa^+|\ls \beta n$, then for every  $j\in G_\fa$ we must have $a_j\gs -n^s-\beta n$. Set $S=G_\fa$, then by \cite[3.3, 5.2, 5.1]{DM} there exists $C\in \NN$ such that
\begin{equation}\label{beqn2}
\sum_{-n^s \ls |\fa| \ls\beta n,\, G_\fa =S}  \dim_k \HH{i}{\fm}{R/J_n}_{\fa}\ls (n^s+\beta n)^{|S|}Cn^{d-|S|}\ls  C(n^{s+1}+\beta n^2)^{d}.
\end{equation}
The conclusion now follows  from \eqref{beqn1}   and Inequality \eqref{beqn2},  by adding the latter over all possible $S$ and setting $t=d(s+1)$.
\end{proof}

We finish the section with the following remark.

\begin{remark}
Assume  $I$ is generated in a single degree $\gamma$. In \cite[3.4]{BCH} the authors showed that for every $i\gs 0$, there exists a set $\Lambda_i\subseteq \ZZ$ and a function $\eta_i:\Lambda_i\to \NN$, such that $\HH{i}{\fm}{R/I^n}_{l+n\gamma }\neq 0$ for every $l\in \Lambda_i$ and $n\gs \eta_i(l)$. We note that if one is able to show that the for certain $i$ the image of the function $\eta_i$ is  bounded, then the Noetherian assumption on $ \HH{i}{\fm}{R/I^n}$ for $n\gg 0$ would imply  $\Lambda_i$ is finite, and then by \cite[3.4]{BCH}  the sequence $ \{\indeg \HH{i}{\fm}{R/I^n}\}_{n\in \NN}$ would agree with a linear function for $n\gg 0$.
\end{remark}

\section{Monomial ideals}\label{monoSec}
The purpose of this section is to analyze the asymptotic behavior of $\{\indeg \HH{i}{\fm}{R/I^n}\}_{n\in \NN}$ for monomial ideals.
From now on we assume $R=k[x_1,\ldots, x_d]$, $\fm=(x_1,\ldots, x_d)$, and $I$ is a monomial ideal.


\begin{Notation}\label{takNot}
Let $F$ be a subset of $[d]= \{1,\ldots, d\}$. We consider the map $\pi_F: R\longrightarrow R,$ defined by $\pi_F(x_i)=1$ if $i\in F$, and $\pi_F(x_i)=x_i$ otherwise.  We set $I_F:=\pi_F(I)$. 
 For $\fa=(a_1,\ldots, a_d)\in \NN^d$, we use the notation $\fx^\fa := x_1^{a_1}\cdots x_d^{a_d}$.    We also consider $G_\fa=\{i\mid a_i<0\}$ and define  $\fa^+=(a_1^+,\ldots, a_d^+)$, where $a_i^+=a_i$ if $i\not\in G_\fa $ and $a_i^+=0$ otherwise.  We set $\Delta_{\fa}(I)$ to be the simplicial complex of all subsets $F$ of $[d]\setminus G_\fa$ such that $\fx^{\fa^+}\not\in I_{F\cup G_\fa}$.   We note that $\Delta_{\fa}(I)$ is a subcomplex of $\Delta(I)$, the simplicial complex whose Stanley-Reisner ideal  is  $\sqrt{I}$ (\cite[1.3]{MT}). 
\end{Notation}


We now state Takayama's formula which expresses the graded components of local cohomology of monomial ideals in terms of reduced homology of some associated simplicial complexes. 
 
 \begin{thm}[ {\cite[Theorem 1]{Tak}}]\label{Takayama}
 For every $\fa\in \ZZ^d$ and $i\gs 0$ we have
 $$\dim_k \HH{i}{\fm}{R/I}_{\fa} =  \dim_k  \tilde{H}_{i-|G_\fa |-1}(\Delta_{\fa}(I), k)$$
 \end{thm}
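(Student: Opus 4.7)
The plan is to prove Takayama's formula by computing local cohomology via the \v{C}ech complex on the variables $x_1,\ldots,x_d$ and exploiting the fine $\ZZ^d$-grading inherited from the monomial structure of $I$. Recall
$$
\HH{i}{\fm}{R/I}=H^i\bigl(\check C^\bullet(x_1,\ldots,x_d;R/I)\bigr),
\qquad \check C^p = \bigoplus_{\substack{F'\subseteq [d]\\ |F'|=p}}(R/I)_{\fx^{F'}},
$$
where $\fx^{F'}=\prod_{i\in F'}x_i$. Since each localization $(R/I)_{\fx^{F'}}$ is $\ZZ^d$-graded, the entire \v{C}ech complex splits as a direct sum of complexes of $k$-vector spaces indexed by $\fa\in\ZZ^d$, and it suffices to analyze the degree-$\fa$ strand.

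First I would identify the degree-$\fa$ piece of each summand. An element of $(R/I)_{\fx^{F'}}$ of degree $\fa$ may be written $\fx^{\fa^+}/\prod_{i\in G_\fa}x_i^{-a_i}$ only when $G_\fa\subseteq F'$; otherwise the component is zero because the needed variables have not been inverted. Assuming $G_\fa\subseteq F'$, the class is nonzero in $(R/I)_{\fx^{F'}}$ exactly when no multiple of $\fx^{\fa^+}$ by a monomial supported on $F'$ lies in $I$, i.e., when $\fx^{\fa^+}\notin \pi_{F'}(I)=I_{F'}$. Thus
$$
\dim_k (R/I)_{\fx^{F'},\fa}=\begin{cases}1, & G_\fa\subseteq F'\text{ and }\fx^{\fa^+}\notin I_{F'},\\ 0,&\text{otherwise.}\end{cases}
$$
This is precisely the condition that $F:=F'\setminus G_\fa$ is a face of $\Delta_\fa(I)$.

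Next I would reindex the summands by $F=F'\setminus G_\fa$. With this substitution the \v{C}ech complex in degree $\fa$ becomes
$$
\check C^\bullet(R/I)_\fa \;\cong\; \bigoplus_{F\in \Delta_\fa(I)} k\cdot e_F,
$$
with the summand $k\cdot e_F$ placed in cohomological degree $|F|+|G_\fa|$. Comparing with the reduced simplicial cochain complex $\tilde C^\bullet(\Delta_\fa(I);k)$, whose summand for a face $F$ sits in degree $|F|-1$, there is a cohomological shift of $|G_\fa|+1$. A direct check on signs shows that the \v{C}ech differential, restricted to this graded strand and reindexed, coincides (up to the standard sign convention) with the simplicial coboundary on $\Delta_\fa(I)$; this verification of the sign convention is where I expect the main bookkeeping to lie, but it is the usual one appearing in the identification of the monomial \v{C}ech complex with a Koszul-type simplicial complex.

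Taking cohomology therefore yields a graded isomorphism
$$
\HH{i}{\fm}{R/I}_\fa \;\cong\; \tilde H^{\,i-|G_\fa|-1}\bigl(\Delta_\fa(I);k\bigr).
$$
Finally, since we are working over a field, the universal coefficient theorem gives $\dim_k \tilde H^{j}(\Delta_\fa(I);k)=\dim_k\tilde H_{j}(\Delta_\fa(I);k)$ for every $j$, delivering the stated equality of dimensions. The main obstacle is the careful sign/reindexing bookkeeping in the third step; once this is set up, the result is immediate from the \v{C}ech description of $\HH{i}{\fm}{-}$.
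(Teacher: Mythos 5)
The paper states this theorem without proof, citing it directly from Takayama \cite{Tak} (in the reformulation of Minh--Trung \cite{MT}). Your argument is the standard \v{C}ech-complex proof of this formula and is correct: the identification of the degree-$\fa$ strand of $(R/I)_{\fx^{F'}}$ as $k$ precisely when $G_\fa\subseteq F'$ and $\fx^{\fa^+}\notin I_{F'}$, the reindexing $F=F'\setminus G_\fa$ with cohomological shift $|G_\fa|+1$, and the match-up with the reduced simplicial coboundary (followed by $\dim\tilde H^j=\dim\tilde H_j$ over a field) are exactly the steps of Takayama's original argument, and you handle them accurately.
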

 
 The following is the main theorem of this section. 
 
 \begin{thm}\label{monomials}
 Let $I$ be a monomial ideal and assume $\HH{i}{\fm}{R/I^n}$ is Noetherian for $n\gg 0$. Then one of the following holds
 \begin{enumerate}
 \item If $\tilde{H}_{i-1}(\Delta(I),k)\neq 0$, then $\indeg \HH{i}{\fm}{R/I^n} =0$ for $n\gg 0$.
 \item If $\tilde{H}_{i-1}(\Delta(I),k) = 0$ then $\liminf_{n\rightarrow \infty}\frac{\indeg \HH{i}{\fm}{R/I^n} }{n}\gs 1$.
 \end{enumerate}
 \end{thm}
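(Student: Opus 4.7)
The plan is to apply Takayama's formula (Theorem~\ref{Takayama}) to analyze each multigraded piece $\HH{i}{\fm}{R/I^n}_\fa$, splitting the argument according to whether $G_\fa$ is empty.

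The first step uses the Noetherian hypothesis to eliminate every $\fa \in \ZZ^d$ with $G_\fa \neq \emptyset$. By the definition of $\Delta_\fa(I^n)$ in Notation~\ref{takNot}, both the complex $\Delta_\fa(I^n)$ and the homological index $i - |G_\fa| - 1$ appearing in Takayama's formula depend on $\fa$ only through the pair $(G_\fa, \fa^+)$. Thus if $\HH{i}{\fm}{R/I^n}_\fa \neq 0$ for some $\fa$ with $G := G_\fa \neq \emptyset$, then $\HH{i}{\fm}{R/I^n}_{\fa'} \neq 0$ for every $\fa' \in \ZZ^d$ with $G_{\fa'} = G$ and $(\fa')^+ = \fa^+$. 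Since the entries of $\fa'$ on $G$ can be chosen to be arbitrary negative integers, this yields nonzero components of arbitrarily negative total degree, contradicting $\indeg \HH{i}{\fm}{R/I^n} > -\infty$. Hence for $n \gg 0$ the support of $\HH{i}{\fm}{R/I^n}$ is contained in $\NN^d$.

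The second step compares $\Delta_\fa(I^n)$ with $\Delta(I)$ for $\fa \in \NN^d$. The containment $\Delta_\fa(I^n) \subseteq \Delta(I)$ is automatic: if $F \notin \Delta(I)$ then some generator of $I$ is supported on $F$, so $I_F = R_F := k[x_j : j \notin F]$ and $\pi_F(\fx^\fa) \in I_F^n$. Conversely, if $F \in \Delta(I)$ then $I_F$ is a proper monomial ideal of $R_F$ generated in degrees $\gs 1$, so every nonzero element of $I_F^n$ has degree $\gs n$. Since $\deg \pi_F(\fx^\fa) \ls |\fa|$, the inequality $|\fa| < n$ forces $\pi_F(\fx^\fa) \notin I_F^n$, i.e., $F \in \Delta_\fa(I^n)$. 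Therefore $\Delta_\fa(I^n) = \Delta(I)$ for every $\fa \in \NN^d$ with $|\fa| < n$, and Takayama's formula gives $\dim_k \HH{i}{\fm}{R/I^n}_\fa = \dim_k \tilde H_{i-1}(\Delta(I), k)$ for such $\fa$.

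Both conclusions now follow. In case (1), evaluating at $\fa = 0$ yields $\HH{i}{\fm}{R/I^n}_0 \cong \tilde H_{i-1}(\Delta(I), k) \neq 0$, giving $\indeg \HH{i}{\fm}{R/I^n} \ls 0$; combined with the first step this forces $\indeg \HH{i}{\fm}{R/I^n} = 0$ for $n \gg 0$. In case (2), the vanishing $\tilde H_{i-1}(\Delta(I), k) = 0$ and the second step give $\HH{i}{\fm}{R/I^n}_\fa = 0$ for every $\fa \in \NN^d$ with $|\fa| < n$, and the first step kills all remaining $\fa$; hence $\indeg \HH{i}{\fm}{R/I^n} \gs n$ for $n \gg 0$, so $\liminf_{n \to \infty} \frac{\indeg \HH{i}{\fm}{R/I^n}}{n} \gs 1$. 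The only delicate point is the first step: the Noetherian hypothesis is used not just to bound $\indeg$ from below but to force the entire support of $\HH{i}{\fm}{R/I^n}$ into $\NN^d$, and this relies on observing that $\Delta_\fa(I^n)$ is invariant under perturbation of the negative coordinates of $\fa$. The rest of the argument is a straightforward degree count.
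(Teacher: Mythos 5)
Your proof is correct and follows essentially the same path as the paper: apply Takayama's formula, use the Noetherian hypothesis to restrict the support to $\NN^d$, observe $\Delta_\fa(I^n)=\Delta(I)$ for $|\fa|<n$ by a degree count, and read off the two cases from $\tilde H_{i-1}(\Delta(I),k)$. The only difference is that where the paper simply cites \cite[Proposition 1]{Tak} to kill components with $G_\fa\neq\emptyset$ and \cite[1.4]{MT} for $\Delta_{\f0}(I^n)=\Delta(I)$, you re-derive both facts from the definition (correctly observing that $\Delta_\fa(I^n)$ depends only on $(G_\fa,\fa^+)$), which makes your argument more self-contained but not structurally different.
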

 \begin{proof}
By \cite[Proposition 1]{Tak} and the assumption we have $\HH{i}{\fm}{R/I^n}_{\fa}=0$ for $n\gg 0$, and every $\fa\in \ZZ^d$ such that $G_\fa\neq \emptyset$. In particular,  $\indeg \HH{i}{\fm}{R/I^n} \gs 0$ for every  $n\gg 0$. Set $\f0=(0,\ldots,0)\in \NN^d$. We note that $\Delta_{\f0}(I^n) = \Delta(I)$ for every $n$ (\cite[1.4]{MT}), therefore if $\tilde{H}_{i-1}(\Delta(I),k)\neq 0$, Theorem \ref{Takayama} implies $\indeg \HH{i}{\fm}{R/I^n} =0$ for every $n\gg 0$.

Now, assume $\tilde{H}_{i-1}(\Delta(I),k) = 0$. Fix $n\in \NN$ and $\fa\in \NN^d$ such that $|\fa |<n$. 
For every facet $F$ of $\Delta(I)$ we have $I_F^n\neq 1$, hence by degree reasons $\fx^\fa\not\in I_F^n$. 
It follows $\Delta_\fa(I^n)=\Delta(I)$ and then $\HH{i}{\fm}{R/I^n}_{\fa}=0$ by Theorem \ref{Takayama}. We conclude $\indeg \HH{i}{\fm}{R/I^n}\gs n$, finishing the proof. 
 \end{proof}

\begin{remark}
The condition $\tilde{H}_{i-1}(\Delta(I),k)\neq 0$ in Theorem \ref{monomials} (1) is automatically satisfied if $\HH{i}{\fm}{R/I^n}$ is Noetherian for some $n\in \NN $ and $\HH{i}{\fm}{R/\sqrt{I}}\neq 0$ (\cite[4.9]{DDM}). 
\end{remark}

The following example answers Question \ref{motivQ}, (2) in the particular case that $\Delta(I)$ is a cycle graph $\C_d$ for $d\gs 5$.

\begin{example}
Let $d\gs 5$ and $\C_d$ be the cycle graph of length $d$, i.e., the edges of $\C_d$ are indexed by $\{i,i+1\}$ for $1\ls i\ls d$ where $\{d,d+1\}:=\{d,1\}$. Let $I$ be the Stanley-Reisner ideal of $R$ associated to the complex $\Delta(I)=\C_d$. Then 
\begin{equation}\label{limitExMon}
\lim_{n\rightarrow \infty}\frac{\indeg\HH{1}{\fm}{R/I^n}}{n}=1.
\end{equation}
To show this, notice that since $\Delta(I)$ is connected, we have $\tilde{H}_0(\Delta(I))=0$. Then by Theorem \ref{monomials} it suffices to show $\displaystyle\limsup_{n\rightarrow \infty}\frac{\indeg\HH{1}{\fm}{R/I^n}}{n}\ls 1$. For each $1\ls i\ls d$, set $\fp_i = (\{x_j\mid j\neq i,i+1\})$. Hence, $\Ass(I)=\{\fp_1,\ldots, \fp_d\}$.

Note that 
$$I_{\{1\}}=(x_3,\ldots,x_n)\cap(x_2,\ldots, x_{n-1})=(x_2x_n, x_3,x_4,\ldots, x_{n-1})$$ 
which is a complete intersection. Likewise, $I_{\{i\}}$ is a complete intersection for every $1\ls i\ls d$, therefore $\Proj R/I$ is lci. Hence, $\HH{1}{\fm}{R/I^n}$ is Noetherian for every $n\in \NN$ and  we have $$\tilde{I^n}=(I^n:_R \fm^\infty ) = \cap_{i=1}^d \fp_i^n .$$
Fix $n\gg 0$ and let $\fa_n = (n-d+4, 0, 1, \ldots, 1, 0)\in \NN^d $, then  one readily verifies $\Delta_{\fa_n}(\tilde{I^n})$ is the subcomplex of $\Delta(I)$ whose facets are $\{i,i+1\}$ for $i\neq 2, d-1$. Since $\Delta_{\fa_n}(\tilde{I^n})$ is disconnected, we have $\dim_k \HH{1}{\fm}{R/\tilde{I^n}}_{\fa_n} = \dim_k \tilde{H}_0(\Delta_{\fa_n}, k)\neq 0$. We conclude $$\indeg  \HH{1}{\fm}{R/\tilde{I^n}}\ls |\fa_n|=n+1.$$ Finally, the limit \eqref{limitExMon} follows by noticing $\HH{1}{\fm}{R/I^n} \cong \HH{1}{\fm}{R/\tilde{I^n}}$.
\end{example}

\section*{Acknowledgments}

The authors are grateful to David Eisenbud, Jack Jeffries, Robert Lazarsfeld, Luis N\'u\~nez-Betan\-court, and Claudiu Raicu for very helpful discussions. They also thank Tai H\`a for bringing the reference \cite{BCH} to their attention. Part of the research included in this article was developed in the Mathematisches Forschungsinstitut Oberwolfach (MFO) while the authors were in residence at the institute under the program  {\it Oberwolfach Leibniz Fellows}. The authors thank MFO for their hospitality and excellent conditions for conducting research. The authors would also like to thank the referee for her or his helpful comments and suggestions that improved this paper.

\end{document}